\newcommand{\excise}[1]{}%{$\star$\textsc{#1}$\star$}
\newtheorem{theorem}{Theorem}[section]
\newtheorem{lemma}[theorem]{Lemma}
\newtheorem{corollary}[theorem]{Corollary}
\newtheorem{proposition}[theorem]{Proposition}
\theoremstyle{definition}
\newtheorem{example}[theorem]{Example}
\newtheorem{remark}[theorem]{Remark}
\newtheorem{definition}[theorem]{Definition}
\newtheorem{convention}[theorem]{Convention}
\newtheorem{notation}[theorem]{Notation}
\noindent\makebox[0mm][r]{\arabic{enumi}.}}
\noindent\makebox[0mm][r]{(\roman{enumi})}}
\def\<{\langle}
\def\>{\rangle}
\def\0{\mathbf{0}}
\def\CC{{\mathbb C}}
\def\NN{{\mathbb N}}
\def\QQ{{\mathbb Q}}
\def\VV{{\mathbb V}}
\def\ZZ{{\mathbb Z}}
\def\kk{\Bbbk}
\def\mm{{\mathfrak m}}
\def\kk{{\Bbbk}}
\def\del{\partial}
\def\qdeg{{\rm qdeg}}
\def\tdeg{{\rm tdeg}}
\def\Span{{\rm Span}}
\def\rank{{\rm rank\ }}
\def\cocoa{{\hbox{\rm C\kern-.13em o\kern-.07em C\kern-.13em o\kern-.15em A}}}
\def\image{{\rm image\ }}
\def\codim{{\rm codim}}
\def\link{{\rm lk\ }}
\def\minus{\smallsetminus}
\def\nothing{\varnothing}
\def\ol#1{{\overline {#1}}}
\def\wt#1{{\widetilde {#1}}}
\numberwithin{equation}{section}
\begin{document}%%%%%%%%%%%%%%%%%%%%%%%%%%%%%%%%
%%%%%%%%%%%%%%%%%%%%%%%%%%%%%%%%%%%%%%%%%

\mbox{}
\vspace{-3ex}
\title{A-graded methods for monomial ideals} 

\author{Christine Berkesch}
\address{Department of Mathematics \\ Purdue University \\
West Lafayette, IN 47907}
\email{cberkesc@math.purdue.edu}
\thanks{CB was partially supported by NSF Grant DMS 0555319}

\author{Laura Felicia Matusevich}
\address{Department of Mathematics \\
Texas A\&M University \\ College Station, TX 77843.}
\email{laura@math.tamu.edu}
\thanks{LFM was partially supported by NSF Grant DMS 0703866 and a Sloan
Research Fellowship}

\subjclass[2000]{Primary: 13D45, 16E45; Secondary: 13F55, 13C14, 13D07}

\begin{abstract}
We use $\ZZ^d$-gradings to study $d$-dimensional monomial ideals.
The Koszul functor is employed to interpret the quasidegrees
of local cohomology in terms of the geometry of distractions
and to explicitly compute the multiplicities of exponents.
These multigraded techniques originate from the
study of hypergeometric systems of differential equations.
\end{abstract}
\maketitle

\mbox{}
\vspace{-5.05ex}
\parskip=0ex
\parindent2em
\setcounter{tocdepth}{2}
%\tableofcontents
\parskip=1ex
\parindent0pt

%%%%%%%%%%%%%%%%%%%%%%%%%%%%%%%%%%%%%%%%%
\section{Introduction}%%%%%%%%%%%%%%%%%%%%%%%%%%%%%%
%%%%%%%%%%%%%%%%%%%%%%%%%%%%%%%%%%%%%%%%%
\label{sec:intro}

A cornerstone of combinatorial commutative algebra is the
connection between simplicial complexes and
ideals generated by squarefree monomials, also known as
\emph{Stanley--Reisner ideals}.
One of the fundamental results in this theory is
Reisner's criterion (Theorem~\ref{thm:reisner}),
which expresses the Cohen--Macaulayness of a
Stanley--Reisner ring as
a topological condition on the corresponding
simplicial complex.

These ideas can be adapted to study monomial ideals $I$
that are not squarefree. 
For instance, \emph{polarization} produces
a squarefree monomial ideal that shares many important properties
with~$I$. However, this construction introduces (many) new variables, 
making it impractical from a computational standpoint 
(see Example~\ref{ex:polarization}).

In contrast, the \emph{distraction} of a monomial ideal $I$ 
(used in Hartshorne's work on the Hilbert scheme \cite{Hartshorne paper})
has many of the features of its polarization, without the increase in dimension. 
While the distraction of $I$
is not a monomial ideal unless $I$ is squarefree, it can be studied
using a finite family of simplicial complexes,
called \emph{exponent complexes} (see Definition~\ref{def:deltab}). 

Distractions arise naturally in the algebraic study of differential equations.
Let $I$ be a monomial ideal in the ring
$\CC[\del] = \CC[\del_1,\dots,\del_n]$, a commutative polynomial subalgebra 
of the Weyl algebra $D_n$ of linear partial differential operators on 
$\CC[x_1,\dots, x_n]$, where $\del_i$ denotes the operator ${\del}/{\del x_i}$.
Let $\theta_i = x_i \del_i$, 
so that $\CC[\theta] = \CC[\theta_1,\dots,\theta_n]$ is also 
a commutative polynomial subalgebra of $D_n$.
The \emph{distraction} of $I \subseteq \CC[\del]$ is the ideal
\[
\tilde{I} =  
(\CC(x) \otimes_{\CC[x]} D \cdot I) \cap \CC[\theta] \subseteq \CC[\theta].
\] 

Let $d = \dim(\CC[\del]/I)$. Any integer matrix $A$
whose columns $a_1,\dots,a_n$ span $\ZZ^d$ as a lattice
induces a $\ZZ^d$-grading on $D_n$ 
via $\tdeg(\del_j) = a_j = - \tdeg(x_j)$.  
Monomial ideals 
and their distractions 
are $A$-homogeneous and 
have the same holomorphic solutions 
when considered as systems of differential equations. 
While this solution space is 
usually infinite dimensional, 
its subspace of $A$-homogeneous solutions 
of any particular degree is finite dimensional (see Definition~\ref{def:rank}). 
One captures these solutions
by adding 
\emph{Euler operators} to these ideals, 
where a chosen degree is viewed as a parameter.

In this article we compute the dimension of the 
$A$-homogeneous holomorphic solutions 
of $I$ as a function of the parameters. 
Our starting point is Theorem~\ref{thm:EK-homology}, which 
provides a strong link between the Koszul homology of 
$\CC[\theta]/\tilde{I}$ with respect to the Euler operators  
and the $A$-graded structure of the local cohomology of $\CC[\del]/I$
at the maximal ideal. 
The background necessary for this result 
occupies Section~\ref{sec:apply-mmw}. 
In Section~\ref{sec:S-R expts}, we use the exponent complexes of $\tilde{I}$  
to provide a new topological criterion for the 
Cohen--Macaulayness of the monomial ideal $I$, Theorem~\ref{thm:main}. 
Finally, in Section~\ref{sec:primary complex}, 
we give a combinatorial formula for the dimension of the 
$A$-homogeneous holomorphic solutions of $I$ of a fixed degree, 
which may be viewed as an intersection multiplicity
(see Theorem~\ref{thm:dim count squarefree}~and
Proposition~\ref{thm:dim count general}).

This work was inspired by the theory
of $A$-hypergeometric differential equations, whose
intuition and techniques are employed throughout, 
especially those developed
in \cite{SST,MMW,DMM,rank jumps}. 
As we apply results from partial differential equations 
in Sections~\ref{sec:apply-mmw} and~\ref{sec:S-R expts},
we use the ground field $\CC$. 
Our techniques in Section~\ref{sec:primary complex} 
are entirely homological and thus work over any field of characteristic zero.
Since our findings were first circulated,
Ezra Miller \cite{miller-letter} has found alternative 
commutative algebra proofs for the results in Section~\ref{sec:S-R expts},
as well as a version of Theorem~\ref{thm:EK-homology}, which are valid over
an arbitrary field.

%%%%%%%%%%%%%%%%%%%%%%%%%%%%%%%%%%%%%%%%%
\subsection*{Acknowledgments}%%%%%%%%%%%%%%%%%%%%%%%%%
%%%%%%%%%%%%%%%%%%%%%%%%%%%%%%%%%%%%%%%%%
We are grateful to Uli Walther for his hospitality while the
second author visited Purdue University, and for thoughtful comments
that improved a previous version of this article. We also thank
Giulio Caviglia, who pointed us to the polarization of monomial ideals,
Vic Reiner, for crucial advice on simplicial matroids that benefited 
Section~\ref{sec:primary complex}, 
and Bernd Sturmfels, for his encouraging comments.
We especially thank Ezra Miller,
who made us aware of important references,
and offered many valuable suggestions, including
the clarification of Remark~\ref{remark:finitely many}.

%%%%%%%%%%%%%%%%%%%%%%%%%%%%%%%%%%%%%%%%%
\section{Differential operators, local cohomology, and $\ZZ^d$-gradings}%%%%%
%%%%%%%%%%%%%%%%%%%%%%%%%%%%%%%%%%%%%%%%%
\label{sec:apply-mmw}

Although our results can be stated in commutative terms, our
methods and ideas come from the study of hypergeometric differential
equations. Thus, we start by introducing the \emph{Weyl algebra} 
$D = D_n$ of linear partial differential operators with polynomial coefficients:
\[
D = \CC\left\< x, \del \; \bigg\vert \;  
              [\del_i,x_j]=\delta_{ij}, [x_i,x_j]=0=[\del_i,\del_j] 
       \right\>,
\]
where $x = x_1,\dots,x_n$ and $\del=\del_1,\dots,\del_n$.
We distinguish two commutative polynomial subrings of $D$, namely
$\CC[\del] = \CC[\del_1,\dots,\del_n]$ and
$\CC[\theta]=\CC[\theta_1,\dots,\theta_n]$, where
the $\theta_i = x_i\del_i$, 
and we denote $\mm = \< \del_1,\dots, \del_n\> \subseteq \CC[\del]$.

\begin{convention}
\label{conv:I}
Henceforth, $I$ is a monomial ideal in $\CC[\del]$,
and $d$ denotes the Krull dimension of the 
ring $\CC[\del]/I$.
\end{convention}

\begin{definition}
\label{def:distraction}
Given any left $D$-ideal $J$, the \emph{distraction} of $J$ is the ideal
\[
\tilde{J} = (\CC(x) \otimes_{\CC[x]} D \cdot J )\cap \CC[\theta]
\subseteq \CC[\theta].
\]
For the monomial ideal $I \subseteq \CC[\del]$,
the identity
$x_i^m \del_i^m = \theta_i(\theta_i - 1) \cdots (\theta_i - m + 1)$ implies that
\begin{equation}
\label{eqn:distraction}
\tilde{I} = \wt{(D\cdot I)}  =
\left\<
    [\theta]_u :=\prod_{i=1}^n \prod_{j=0}^{u_i-1} (\theta_i-j) \ 
                             \bigg\vert \ \del^u \in I
\right\>.
\end{equation}
\end{definition}

\begin{remark}
We emphasize that $I$ and $\tilde{I}$ live in different 
$n$-dimensional polynomial rings. 
Also, note that the zero set of the distraction 
$\tilde{I}$ is the Zariski closure
in $\CC^n$ of the exponent vectors of the standard monomials of $I$.
\end{remark}

\begin{remark}
We can think of $I$ and $\tilde{I}$ either as systems of partial differential
equations or as systems of polynomial equations, and this dichotomy will
be useful later on. To avoid confusion, a \emph{solution}
of $I$ or $\tilde{I}$ will always be a holomorphic function,
while the name \emph{zero set} (and the notation $\VV(I)$, $\VV(\tilde{I})$) 
is reserved for an algebraic variety in $\CC^n$.
\end{remark}

A monomial ideal in $\CC[\del]$ is automatically homogeneous with
respect to every natural grading of the polynomial ring,
from the usual (coarse) $\ZZ$-grading
to the finest grading by $\NN^n$.
In this article we take the middle road and use a $\ZZ^d$-grading, 
where $d = \dim(\CC[\del]/I)$.

\begin{definition}
\label{def:qdeg}
A $\ZZ^d$-grading, called an $A$-grading, 
of the Weyl algebra $D$
is determined by a matrix
$A \in \ZZ^{d\times n}$, or more precisely by its columns
$a_1,\dots,a_n \in \ZZ^d$ via
\[
\tdeg(x_i) = -a_i, \quad \tdeg(\del_i)=a_i
\]
Given an $A$-graded $D$-module $M$, the set of \emph{true degrees}
of $M$ is
\[
\tdeg(M) =
\{ \alpha \in \ZZ^d \mid M_{\alpha} \neq 0\} \subseteq \ZZ^d.
\]
The set of \emph{quasidegrees} of $M$, denoted by $\qdeg(M)$, is the
Zariski closure of $\tdeg(M)$ under the natural inclusion
$\ZZ^d \subseteq \CC^d$.
\end{definition}

The notions of $\tdeg$ and $\qdeg$ arise from the $A$-grading
of $D$, and should not be confused with the
degree $\deg$ of a homogeneous ideal in a polynomial ring
that is computed using the Hilbert polynomial, as in the following definition.

\begin{definition}
The Hilbert polynomial of $\CC[\del]/I$ 
(with respect to the standard $\ZZ$-grading on
$\CC[\del]$) has the form $P_I(z) = \frac{m}{d!} z^d + \cdots$.
The \emph{degree} of $I$, denoted by $\deg(I)$
is the number $m$,
which is equal to the number of
intersection points (counted with multiplicity) of $\VV(I)$
and a sufficiently generic affine space of dimension $n - d$.
\end{definition}

The local cohomology modules of $\CC[\del]/I$ with respect to 
the ideal $\mm=\< \del_1,\dots, \del_n\>$ are also $A$-graded.
We describe the quasidegrees of 
$\bigoplus_{i<d} H^i_{\mm}(\CC[\del]/I)$
in Theorem~\ref{thm:EK-homology}.
First, we make precise the conditions
required of our grading matrix $A$.

\begin{convention}
\label{conv:A}
Let $I \subseteq \CC[\del]$ be a monomial ideal of Krull dimension $d$.
For the remainder of this article,
we fix a $d\times n$ integer matrix $A$ such that
\begin{enumerate}
\item $\ZZ A = \ZZ^d$,
\item $\NN A \cap (-\NN A) = 0$,
\item For each $\sigma \subseteq \{ 1,\dots ,n\}$,
$\dim_{\QQ}(\Span\{a_{i} \mid i \in \sigma\}) = \min(d,|\sigma|)$, 
where $|\sigma|$ is the cardinality of $\sigma$.
\end{enumerate}
\end{convention}

\begin{notation}
\label{not:E}
We denote by $E_i$ the linear form
$\sum_{j=1}^n a_{ij} \theta_j \in \CC[\theta]$,
where $A=(a_{ij})$ is our matrix from Convention~\ref{conv:A}.
For $\beta \in \CC^d$, let $E-\beta$ denote the sequence
$\{ E_i - \beta_i \}_{i=1}^d$ of \emph{Euler operators}; 
in particular, $E$ is the sequence $E_1,\dots,E_d$.
\end{notation}

In Convention~\ref{conv:A}, the first condition ensures that our
grading group is all of $\ZZ^d$, the second that
the cone spanned by the columns of $A$ is \emph{pointed},
that is, contains no lines. The third condition
is used in the computations in Section~\ref{sec:primary complex}, and 
implies that 
$\CC[\theta]/(\tilde{I} + \< E -\beta \>)$ is Artinian for all $\beta \in \CC^d$. 
Further, it implies that the sequence of polynomials
given by the coordinates of $A\cdot \del$, for
$\del$ the column vector $[\del_1,\dots,\del_n]^t$,
is a linear system of parameters for $I$. 
A generic $d\times n$ integral matrix will satisfy these requirements.

\medskip

\emph{Given a $d$-dimensional
monomial ideal $I \subseteq \CC[\del]$ 
as in Convention~\ref{conv:I},
a matrix $A$ as in Convention~\ref{conv:A}, 
and a parameter vector $\beta \in \CC^d$,
our main object of study is the left $D$-ideal
\[
I + \< E - \beta \> \subseteq D. 
\]
}

\smallskip

As with all left $D$-ideals, $I+\< E-\beta\>$ is a system of 
partial differential equations, so we may consider its space of 
germs of holomorphic solutions at a generic nonsingular point in $\CC^n$.

\begin{definition}
\label{def:rank}
By the choice of $A$ in Convention~\ref{conv:A}, $I+\< E-\beta\>$
is \emph{holonomic} (see \cite[Definition~1.4.8]{SST}), so the
dimension of this solution space, 
called the \emph{holonomic rank} of the system 
and denoted $\rank(I+\<E-\beta\>)$, is finite. 

If $\phi(x)$ is a solution of $I +\< E-\beta\>$ and
$t\in (\CC^*)^d$,
then
$\phi(t^{a_1}x_1,\dots,t^{a_n}x_n) = t^{\beta}\phi(x)$.
This justifies the claim in the Introduction:
$\rank(I+\<E-\beta\>)$ is the dimension of the
space of holomorphic solutions of 
$I$ that are $A$-homogeneous of degree $\beta$.
\end{definition}

\begin{lemma}
\label{lemma:rank}
Let $I \subseteq \CC[\del]$ be a monomial ideal and
$\tilde{I} \subseteq \CC[\theta]$ its distraction.
Then there is an equality of ranks: 
$\rank(I+\< E-\beta\>) = \rank(\tilde{I} + \< E-\beta\>)$.
\end{lemma}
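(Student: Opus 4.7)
The plan is to show that after inverting the variables $x_1,\dots,x_n$, the left $D$-ideals generated by $I+\<E-\beta\>$ and by $\tilde{I}+\<E-\beta\>$ coincide, and then apply the standard description of holonomic rank as the dimension of the cyclic module over the localized Weyl algebra.

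First, recall the commutation identity
\[
x_i^{u_i}\del_i^{u_i} = \theta_i(\theta_i-1)\cdots(\theta_i-u_i+1),
\]
so that $x^u \del^u = [\theta]_u$ for every exponent $u \in \NN^n$. Let $D_x = \CC(x)\otimes_{\CC[x]} D$ denote the localization. For each generator $\del^u$ of $I$, the element $x^u$ is a unit in $D_x$, so the equation $[\theta]_u = x^u\del^u$ shows that $[\theta]_u \in D_x \cdot \del^u$ and, inverting, that $\del^u = x^{-u}[\theta]_u \in D_x\cdot [\theta]_u$. Since by \eqref{eqn:distraction} the ideal $\tilde{I}$ is generated precisely by the $[\theta]_u$ with $\del^u \in I$, we conclude
\[
D_x \cdot I \;=\; D_x \cdot \tilde{I}.
\]

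Adjoining the Euler operators (which lie in $\CC[\theta] \subseteq D$ and hence in both sides) preserves this equality, so
\[
D_x\cdot\bigl(I + \<E-\beta\>\bigr) \;=\; D_x\cdot\bigl(\tilde{I} + \<E-\beta\>\bigr).
\]

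Finally, for any left $D$-ideal $J$, the holonomic rank is by definition
\[
\rank(J) \;=\; \dim_{\CC(x)} \bigl( \CC(x)\otimes_{\CC[x]} (D/DJ) \bigr) \;=\; \dim_{\CC(x)} \bigl( D_x/D_x J \bigr),
\]
(see \cite[\S1.4]{SST}); by Convention~\ref{conv:A} both $I+\<E-\beta\>$ and $\tilde{I}+\<E-\beta\>$ are holonomic, so this agrees with the dimension of their germs of holomorphic solutions at a generic point. Applying this formula to the two ideals and using the equality of the localized ideals established above yields the desired equality of ranks.

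The argument is essentially a direct manipulation, so there is no single hard step; the only point requiring care is to make sure the generating sets of $I$ and $\tilde{I}$ translate to one another via units of $D_x$, which is immediate from the identity $x^u\del^u = [\theta]_u$.
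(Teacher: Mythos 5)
Your proof is correct and follows essentially the same route as the paper: both hinge on the identity $[\theta]_u = x^u\del^u$ and the fact that the factor $x^u$ is harmless at a generic point, which the paper phrases as "multiplying on the left by elements of $\CC[x]$ does not change holomorphic solutions" and you formalize by inverting $x$ and comparing the ideals in $D_x = \CC(x)\otimes_{\CC[x]}D$. Your version is simply a more explicit rendering of the paper's one-line argument, using the description of holonomic rank as $\dim_{\CC(x)}\bigl(D_x/D_xJ\bigr)$.
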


\begin{proof}
Recall that $\tilde{I} = \< [\theta]_u \mid \del^u \in I \>$
and $[\theta]_u = x^u\del^u$. The result then follows, as multiplying
operators on the left by elements of $\CC[x]$
does not change their holomorphic solutions.
\end{proof}

Recall that the generators of $\tilde{I}+\< E-\beta\>$ lie in $\CC[\theta]$. 
Left $D$-ideals with this property are called \emph{Frobenius ideals}, 
and their holonomicity can by checked through commutative algebra.

\begin{proposition}\cite[Proposition~2.3.6]{SST}
\label{prop:frobenius}
Let $J$ be an ideal in $\CC[\theta]$. The left
$D$-ideal $D\cdot J$ is holonomic if and only if
$\CC[\theta]/J$ is an Artinian ring. In this case,
$\rank(D\cdot J) = \dim_{\CC}(\CC[\theta]/J)$.
\end{proposition}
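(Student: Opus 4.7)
My plan is to handle this in two stages. First I would establish the rank formula $\rank(D\cdot J) = \dim_\CC \CC[\theta]/J$ by a direct calculation in $\CC(x) \otimes_{\CC[x]} D$, and then I would deduce the equivalence of holonomicity and Artinianness from this formula together with a characteristic-variety computation.

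For the rank formula I would exploit the identity $x^u\del^u = [\theta]_u$ from Definition~\ref{def:distraction}: in $\CC(x) \otimes_{\CC[x]} D$ this rewrites as $\del^u = x^{-u}[\theta]_u$, so the PBW $\CC(x)$-basis $\{\del^u\}_{u \in \NN^n}$ of $\CC(x) \otimes_{\CC[x]} D$ is related to the family $\{[\theta]_u\}_{u \in \NN^n}$, itself a $\CC$-basis of $\CC[\theta]$, by a triangular, invertible change of coordinates with entries in $\CC(x)$. This yields an isomorphism of left $\CC(x)$-modules
\[
\Phi\colon \CC(x) \otimes_\CC \CC[\theta] \;\xrightarrow{\;\sim\;}\; \CC(x) \otimes_{\CC[x]} D, \qquad f \otimes g \longmapsto f \cdot g.
\]
Next I would check that $\Phi$ carries $\CC(x) \otimes_\CC J$ onto the left ideal $(\CC(x)\otimes_{\CC[x]} D)\cdot J$: any generator of the latter has the form $(f \cdot g) \cdot j = f \cdot (g j)$ with $f \in \CC(x)$, $g \in \CC[\theta]$, and $j \in J$, and commutativity of the $\theta_i$ forces $g j \in \CC[\theta] \cdot J = J$. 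Passing to the quotient,
\[
\rank(D\cdot J)
\;=\; \dim_{\CC(x)}\bigl(\CC(x)\otimes_\CC(\CC[\theta]/J)\bigr)
\;=\; \dim_\CC \CC[\theta]/J.
\]

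The two implications of the equivalence now follow. If $D\cdot J$ is holonomic then its rank is finite, hence by the formula $\CC[\theta]/J$ is finite-dimensional, hence Artinian. For the converse, suppose $\CC[\theta]/J$ is Artinian. I would compute the characteristic variety of $D/(D \cdot J)$ with respect to the order filtration on $D$, for which $\del_i$ has order $1$, $x_i$ has order $0$, and therefore $\theta_i = x_i\del_i$ has principal symbol $x_i\xi_i$ in $\mathrm{gr}(D) = \CC[x,\xi]$. Consequently the principal symbol of any $f \in J$ is $f^{\top}(x_1\xi_1,\dots,x_n\xi_n)$, where $f^{\top}$ denotes the top-degree $\theta$-component of $f$. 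Since $\CC[\theta]/J$ is Artinian, the associated-graded ring $\CC[\theta]/J^{\top}$ shares the same (finite) Hilbert function and is also Artinian; being homogeneous this forces $V(J^{\top}) = \{0\}$ in $\CC^n$, and hence
\[
\mathrm{Ch}\bigl(D/(D\cdot J)\bigr) \;\subseteq\; V\bigl(f^{\top}(x_1\xi_1,\dots,x_n\xi_n) : f \in J\bigr) \;=\; V(x_1\xi_1,\dots,x_n\xi_n) \subseteq \CC^{2n},
\]
a union of $2^n$ coordinate $n$-planes. Bernstein's inequality then forces $\dim \mathrm{Ch}(D/(D\cdot J)) = n$, so $D\cdot J$ is holonomic.

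The main delicate point is the identification $\Phi$: it is an isomorphism of left $\CC(x)$-modules but \emph{not} of algebras (since $[\theta_i,x_j] = \delta_{ij} x_i$ in $D$), so the matching of left ideals under $\Phi$ must be verified by hand rather than by transport of algebra structure. The characteristic-variety computation is standard once one knows that the leading-form ideal $J^{\top}$ inherits the Artinian property from $J$, which is the usual equality of Hilbert functions for a filtered module of finite length and its associated graded.
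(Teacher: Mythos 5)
Your argument is correct, and there is nothing in the paper to compare it against: the paper does not prove this statement but quotes it from \cite[Proposition~2.3.6]{SST}, and your reconstruction is essentially the standard (and, in substance, the textbook) argument. The two computational cores are sound: the identity $[\theta]_u = x^u\del^u$ makes $\{[\theta]_u\}$ a left $\CC(x)$-basis of $R=\CC(x)\otimes_{\CC[x]}D$, and commutativity of $\CC[\theta]$ gives $R\cdot J = \Span_{\CC(x)}(J)$, whence $\dim_{\CC(x)} R/R\cdot J = \dim_\CC \CC[\theta]/J$; and for $f\in J$ the order-filtration symbol is indeed $f^{\top}(x_1\xi_1,\dots,x_n\xi_n)$ with no cancellation among the monomials $x^a\xi^a$, so Artinianness of $\CC[\theta]/J^{\top}$ traps the characteristic variety inside $V(x_1\xi_1,\dots,x_n\xi_n)$. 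Be aware that you are implicitly invoking three standard external facts that deserve explicit citation: that holonomic ideals have finite $\dim_{\CC(x)} R/R\cdot J$, that this quantity agrees with the solution-space dimension used as the definition of rank in the paper (Cauchy--Kovalevskaya--Kashiwara, \cite[Theorem~1.4.19]{SST}), and Bernstein's inequality for the order-filtration characteristic variety; none of these is a gap, but your proof is only as self-contained as those references.
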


By our choice of $A$, $\tilde{I}+\<E-\beta\>$
satisfies the hypotheses of Proposition~\ref{prop:frobenius}.
Combining this with Lemma~\ref{lemma:rank} 
and the fact that holonomicity of $\tilde{I}+\<E-\beta\>$ 
implies holonomicity of $I+\< E-\beta\>$,
we obtain the following.

\begin{lemma}
\label{lemma:all-holonomic}
Let $I \subseteq \CC[\del]$ be a monomial ideal as in Convention~\ref{conv:I}
and $A$ an integer matrix as in Convention~\ref{conv:A}. 
The left $D$-ideal $I+\<E-\beta\>$ is holonomic for all $\beta$ and
\begin{equation}
\label{eqn:rank=dim}
\rank(I+\< E-\beta\>) =
\dim_{\CC} \left( \frac{\CC[\theta]}{\tilde{I}+\< E-\beta \>}
  \right).
\end{equation}
\end{lemma}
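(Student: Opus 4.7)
The plan is to reduce the proof to Proposition~\ref{prop:frobenius} applied to the Frobenius ideal $\tilde{I}+\<E-\beta\>$, and then to transfer the conclusion back to $I+\<E-\beta\>$ using Lemma~\ref{lemma:rank}. The central technical point is to verify that $\CC[\theta]/(\tilde{I}+\<E-\beta\>)$ is Artinian, which is precisely the hypothesis of Proposition~\ref{prop:frobenius} and which the text has already asserted as a consequence of Convention~\ref{conv:A}(3).

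First I would establish this Artinianness via a geometric argument. By the explicit description of the generators in~\eqref{eqn:distraction}, $\tilde{I}$ is generated by products of linear forms $\theta_i - j$ with $j \in \NN$, so each minimal prime of $\tilde{I}$ is a complete intersection of $n-d$ such linear forms, and each irreducible component of $\VV(\tilde{I}) \subseteq \CC^n$ is an affine subspace of dimension $d$ parallel to a coordinate subspace $\Span\{e_i \mid i \notin \tau\}$ for some $\tau \subseteq \{1,\dots,n\}$ with $|\tau|=n-d$. Convention~\ref{conv:A}(3) asserts that any $d$-element subcollection of the columns of $A$ is linearly independent, so the restriction of the affine map $\theta \mapsto A\theta - \beta$ to each such component is an affine isomorphism onto $\CC^d$. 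Consequently $\VV(E-\beta)$ meets each component of $\VV(\tilde{I})$ in exactly one point, and $\VV(\tilde{I}+\<E-\beta\>)$ is finite, making $\CC[\theta]/(\tilde{I}+\<E-\beta\>)$ Artinian.

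Applying Proposition~\ref{prop:frobenius} to $J = \tilde{I}+\<E-\beta\>$ then yields simultaneously the holonomicity of $D\cdot J$ and the identity $\rank(D\cdot J) = \dim_\CC(\CC[\theta]/J)$. To transfer holonomicity to $I+\<E-\beta\>$, I would use $[\theta]_u = x^u\del^u$ to observe that inverting the $x_i$ identifies $D\cdot I$ with $D\cdot\tilde{I}$, so the two systems $I+\<E-\beta\>$ and $\tilde{I}+\<E-\beta\>$ have identical characteristic varieties away from the coordinate hyperplanes; holonomicity is a generic condition preserved under this identification. Finally, Lemma~\ref{lemma:rank} supplies $\rank(I+\<E-\beta\>) = \rank(\tilde{I}+\<E-\beta\>)$, completing the equality~\eqref{eqn:rank=dim}.

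The main obstacle is the transversality step establishing Artinianness: one must carefully explain why Convention~\ref{conv:A}(3) forces the $d$ linear forms $E-\beta$ to cut the $d$-dimensional variety $\VV(\tilde{I})$ down to finitely many points, accounting for the fact that the components of $\VV(\tilde{I})$ are affine translates of coordinate subspaces rather than linear ones. Once this is in hand, the remainder of the proof is a routine assembly of Proposition~\ref{prop:frobenius}, the holonomicity transfer, and Lemma~\ref{lemma:rank}.
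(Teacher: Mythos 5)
Your overall strategy coincides with the paper's: check that $\CC[\theta]/(\tilde{I}+\< E-\beta\>)$ is Artinian (the paper simply asserts this after Convention~\ref{conv:A}; your geometric argument supplies a reason, except for the small slip that components of $\VV(\tilde{I})$ arising from embedded primes have dimension strictly less than $d$ --- see Example~\ref{ex:small}, where the point corresponding to $\left[\begin{smallmatrix}2\\1\end{smallmatrix}\right]$ is a component --- so on such components the map $\theta\mapsto A\theta$ is merely injective and $\VV(\<E-\beta\>)$ meets them in \emph{at most} one point; finiteness, hence Artinianness, still follows), then apply Proposition~\ref{prop:frobenius} to the Frobenius ideal $\tilde{I}+\<E-\beta\>$ and transfer the rank equality with Lemma~\ref{lemma:rank}.

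The genuine gap is your transfer of holonomicity from $\tilde{I}+\<E-\beta\>$ to $I+\<E-\beta\>$. Holonomicity is \emph{not} a generic condition: a $D$-module can fail to be holonomic solely because of behavior along the coordinate hyperplanes, even if it is holonomic (or zero) after inverting $x_1\cdots x_n$. For instance, for $n\geq 2$ the module $D/D\cdot x_1$ has characteristic variety $\{x_1=0\}\subseteq T^*\CC^n$, of dimension $2n-1$, so it is not holonomic, yet it vanishes upon inverting $x_1$; its characteristic variety agrees with that of the zero module away from the coordinate hyperplanes. So ``identical characteristic varieties away from the coordinate hyperplanes'' does not propagate holonomicity, and that step of your argument fails as reasoned. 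The needed implication is nevertheless easy, and is what the paper uses: by (\ref{eqn:distraction}), $[\theta]_u=x^u\del^u\in D\cdot I$ for every $\del^u\in I$, so $D\cdot(\tilde{I}+\<E-\beta\>)\subseteq D\cdot(I+\<E-\beta\>)$; hence $D/D(I+\<E-\beta\>)$ is a quotient of the holonomic module $D/D(\tilde{I}+\<E-\beta\>)$, and quotients of holonomic modules are holonomic. With that one-line replacement (keeping Lemma~\ref{lemma:rank} for the equality of ranks), your proof goes through and is essentially the paper's.
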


The right hand side of~(\ref{eqn:rank=dim})
appears again in Corollaries~\ref{thm:local-coh-degrees} 
and~\ref{cor:rank CM} below.
We will compute this rank explicitly
in Section~\ref{sec:primary complex}; 
we observe now 
that $\dim_{\CC}(\CC[\theta]/(\tilde{I} + \< E-\beta \>)) = \deg(I)$ if and
only if the Koszul complex $K_\bullet(\CC[\theta]/\tilde{I};E-\beta)$
has no higher homology. We use the notation
$H_\bullet(\CC[\theta]/\tilde{I};\beta)$ for the homology 
of $K_\bullet(\CC[\theta]/\tilde{I};E-\beta)$.

\begin{theorem}
\label{thm:EK-homology}
Let $I \subseteq \CC[\del]$ be a monomial ideal of dimension $d$,
$\mm = \< \del \>$, and $\tilde{I}\subseteq \CC[\theta]$
be the distraction of $I$.
The Koszul homology $H_i( \CC[\theta]/\tilde{I} ; \beta )$ is
nonzero for some $i>0$
if and only if
$\beta \in \CC^d$ is a quasidegree of
$\bigoplus_{j=0}^{d-1} H_\mm^j( \CC[\del]/I )$.
More precisely, if $k$ equals the smallest homological degree $i$ for which
$\beta \in \qdeg( H_\mm^i( \CC[\del]/I ) )$,
then $H_{d-k}( \CC[\theta]/\tilde{I} ; \beta )$ is holonomic of nonzero rank
while $H_i(  \CC[\theta]/\tilde{I} ; \beta ) = 0$ for $i > d - k$.
\end{theorem}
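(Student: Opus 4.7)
The plan is to reduce the theorem to the Matusevich--Miller--Walther (MMW) characterization of the exceptional parameters of the Euler--Koszul functor in terms of quasidegrees of local cohomology \cite{MMW}. The bridge is an identification of the ordinary Koszul homology $H_i(\CC[\theta]/\tilde I;\beta)$ with the Euler--Koszul homology of the $A$-graded $\CC[\del]$-module $\CC[\del]/I$. The standard monomials $\del^u$ of $I$ are in natural bijection with integer points in $\VV(\tilde I)$, and this bijection (via a suitable evaluation pairing between $\CC[\del]/I$ and $\CC[\theta]/\tilde I$) intertwines multiplication by $E_i = \sum_j a_{ij}\theta_j$ on the $\CC[\theta]/\tilde I$-side with the scalar action by $(Au)_i$ on the $A$-graded piece of $\CC[\del]/I$ in degree $Au$. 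This should produce a chain-level comparison of the two Koszul-type complexes and an isomorphism of their homologies.

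With this comparison in hand, I would invoke the main theorem of \cite{MMW}: for a toric $A$-graded $\CC[\del]$-module $M$, the Euler--Koszul homology at parameter $\beta$ is nonzero in some positive homological degree precisely when $\beta \in \bigcup_{j<d}\qdeg(H_\mm^j(M))$, and moreover the largest homological index for which it is nonzero equals $d-k$, where $k = \min\{j : \beta \in \qdeg(H_\mm^j(M))\}$. Applied to $M = \CC[\del]/I$, which is $A$-graded and finitely generated over $\CC[\del]$ in the sense required by the MMW framework, this transfers through the comparison of the first step and yields exactly the vanishing and nonvanishing pattern asserted here. The nonzero-rank claim for $H_{d-k}(\CC[\theta]/\tilde I;\beta)$ then follows from Proposition~\ref{prop:frobenius} and Lemma~\ref{lemma:all-holonomic}: the Koszul homology, as an Artinian $\CC[\theta]$-module, lifts to a holonomic $D$-module with positive rank whenever it is nonzero.

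The main obstacle is the first step: making the comparison between the commutative Koszul complex on $\CC[\theta]/\tilde I$ and the Euler--Koszul complex on $\CC[\del]/I$ into a precise isomorphism of chain complexes. While the evaluation pairing $\del^u \otimes f(\theta) \mapsto f(u)\cdot u!$ is natural (using that $[\theta]_u$ evaluates to $u!$ at $\theta = u$ and vanishes elsewhere on the coordinate hyperplanes), one must carefully track grading conventions---noting that $\theta_i$ has trivial $A$-degree yet acts nontrivially on the graded pieces of $\CC[\del]/I$ through the pairing---and verify that the identification respects both differentials and the $A$-graded stratification of $\CC[\del]/I$. Once this identification is secured, the homological heart of the proof is furnished entirely by the MMW machinery, and the degree correspondence $i \leftrightarrow d-i$ emerges directly from the duality built into their theorem.
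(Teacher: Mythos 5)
Your overall strategy---transfer the statement to an Euler--Koszul/local-cohomology duality theorem of Matusevich--Miller--Walther type---is indeed the route the paper takes, but the paper's proof is essentially a citation of Theorems~4.6 and~4.9 of \cite{DMM}, the \emph{toral} generalizations of \cite[Theorems~6.6 and 8.2]{MMW}, together with the remark that the needed hypothesis is supplied by Convention~\ref{conv:A} via Lemma~\ref{lemma:all-holonomic}. Here is the first genuine gap in your proposal: the MMW theorems you invoke are stated for \emph{toric} modules, i.e.\ $A$-graded modules built from the semigroup ring $\CC[\NN A]=\CC[\del]/I_A$, and $\CC[\del]/I$ for a monomial ideal $I$ is not toric; being ``$A$-graded and finitely generated over $\CC[\del]$'' is not the hypothesis of those theorems and does not suffice (this is exactly why \cite{DMM} introduce toral modules). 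For $\CC[\del]/I$ the toral property---equivalently, the holonomicity of $I+\<E-\beta\>$ for every $\beta$, which is what Lemma~\ref{lemma:all-holonomic} provides---is not automatic: it rests on the genericity conditions in Convention~\ref{conv:A} (condition (3) forces the standard monomials, which lie in finitely many translates of coordinate subspaces of dimension at most $d$, to map injectively to $\ZZ^d$, so the $A$-graded Hilbert function is bounded). Your proposal nowhere verifies this, and for a non-generic $d\times n$ matrix the conclusion of the theorem is simply not available from \cite{MMW}.

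The second gap is the bridge you yourself flag as ``the main obstacle.'' The Euler--Koszul homology to which the cited results apply is not the Koszul homology of $E-\beta$ acting on $\CC[\del]/I$ (the operators $E_i$ do not even act there); it is computed on $D\otimes_{\CC[\del]}\CC[\del]/I=D/DI$ with the degree-twisted $D$-linear endomorphisms, whereas inducing the commutative complex $K_\bullet(\CC[\theta]/\tilde{I};E-\beta)$ up to $D$ (using that $D$ is free over $\CC[\theta]$) produces a complex on $D/D\tilde{I}$, a genuinely different $D$-module from $D/DI$ (compare Lemma~\ref{lemma:rank}, which only equates ranks, not modules). So an evaluation pairing between $\CC[\del]/I$ and $\CC[\theta]/\tilde{I}$ that intertwines $E_i$ with the scalars $(Au)_i$ does not by itself yield an identification, or even a rank comparison, of the two complexes in all homological degrees; making that comparison precise is real work and is part of what the citation to \cite{DMM} is carrying in the paper's proof. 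Your final step (a nonzero Artinian Koszul homology module induces, via $D\otimes_{\CC[\theta]}-$, a holonomic module of nonzero rank, in the spirit of Proposition~\ref{prop:frobenius}) is fine, but it only matters once the two gaps above are closed: replace the appeal to \cite{MMW} by the toral statements of \cite{DMM}, verify torality/holonomicity from Convention~\ref{conv:A}, and either construct the comparison of complexes honestly or cite it.
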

\begin{proof}
This is a combination of Theorems~4.6 and~4.9 \cite{DMM}, 
which are generalized versions of Theorems~6.6 and~8.2 in \cite{MMW}. 
Note that in order to apply these results, we need
the holonomicity of $I+\<E-\beta\>$ for all $\beta$,
which follows from Convention~\ref{conv:A} and
Lemma~\ref{lemma:all-holonomic}.
\end{proof}

\begin{corollary}
\label{thm:local-coh-degrees}
If $I \subseteq \CC[\del]$ is a monomial ideal, $d=\dim(\CC[t]/I)$, 
and $A$ is a $d\times n$ matrix as in Convention~\ref{conv:A}, then
\[
\left\{
\beta \in \CC^d \; \bigg\vert \;
     \dim_{\CC}\left(
          \frac{\CC[\theta]}{\tilde{I} + \< E -\beta \>}
          \right)
>
\deg(I)
\right\}
=
\qdeg
\left(
  \bigoplus_{i<d} H^i_{\mm}\left(\frac{\CC[\del]}{I} \right)
\right).
\]
\end{corollary}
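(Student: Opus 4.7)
The plan is to combine Lemma~\ref{lemma:all-holonomic}, Theorem~\ref{thm:EK-homology}, and an Euler characteristic computation on the Koszul complex $K_\bullet(\CC[\theta]/\tilde{I}; E-\beta)$. By Lemma~\ref{lemma:all-holonomic}, the dimension appearing on the left-hand side of the claimed identity equals $\dim_\CC H_0(\CC[\theta]/\tilde{I}; \beta)$, so the task reduces to comparing this zeroth Koszul homology with $\deg(I)$.

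First I would establish the Koszul Euler characteristic identity
\[
\sum_{i=0}^{d} (-1)^i \dim_\CC H_i(\CC[\theta]/\tilde{I}; \beta) \;=\; \deg(I), \qquad \text{for every } \beta \in \CC^d.
\]
Convention~\ref{conv:A}(3) ensures that $E_1,\ldots,E_d$ is a linear system of parameters on $\CC[\theta]/\tilde{I}$, so the Koszul Euler characteristic equals the graded Samuel multiplicity $e(E;\CC[\theta]/\tilde{I})$, which for a linear system of parameters coincides with $\deg(\tilde{I}) = \deg(I)$. One inclusion in the corollary then follows immediately: if $\beta$ lies outside $Q := \qdeg(\bigoplus_{i<d} H^i_{\mm}(\CC[\del]/I))$, Theorem~\ref{thm:EK-homology} forces $H_i(\CC[\theta]/\tilde{I};\beta) = 0$ for every $i > 0$, and the identity above collapses to $\dim_\CC H_0 = \deg(I)$.

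For the reverse inclusion I would study the function $f(\beta) := \dim_\CC(\CC[\theta]/(\tilde{I} + \<E-\beta\>))$, which is upper semi-continuous in $\beta$ as the $\CC$-dimension of an Artinian fiber of a coherent family. The previous step shows that $f$ takes the constant value $\deg(I)$ on the dense Zariski-open complement of $Q$, so upper semi-continuity forces $f(\beta) \geq \deg(I)$ everywhere, and exhibits $\{\beta : f(\beta) > \deg(I)\}$ as a Zariski-closed subset contained in $Q$. To promote this containment to equality, I would work at a generic point of each irreducible component of $Q$: the refined part of Theorem~\ref{thm:EK-homology} identifies the top nonvanishing Koszul homology $H_{d-k}$ there, and combined with the Euler characteristic identity this should yield $f > \deg(I)$ on a Zariski-dense subset of $Q$; upper semi-continuity then propagates the strict inequality to all of $Q$.

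The main obstacle is controlling potential cancellation in the alternating sum $\sum_{i\geq 1} (-1)^{i-1} \dim_\CC H_i$: Theorem~\ref{thm:EK-homology} only constrains the top nonvanishing Koszul homology and leaves the intermediate terms $H_i$ with $0 < i < d-k$ uncontrolled, so in principle the alternating sum could vanish while some $H_i$ is nonzero. Overcoming this step is the heart of the argument, and I would attempt to resolve it either by extracting a sharper non-cancellation statement from the proofs of Theorems~4.6 and~4.9 in~\cite{DMM} referenced by Theorem~\ref{thm:EK-homology}, or by refining the generic-point analysis of the previous paragraph to verify the strict inequality directly on each irreducible component of $Q$.
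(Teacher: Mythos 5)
Your route is essentially the paper's: the corollary is deduced from Theorem~\ref{thm:EK-homology} together with the equivalence between $\dim_\CC(\CC[\theta]/(\tilde{I}+\<E-\beta\>))=\deg(I)$ and the vanishing of all higher Koszul homology $H_i(\CC[\theta]/\tilde{I};\beta)$, $i>0$. Your Euler characteristic identity $\chi=\deg(I)$ does give the implication ``all $H_i=0$ for $i>0$ implies dimension $=\deg(I)$'', hence the inclusion of the left-hand set in the quasidegree locus. (A small caveat: for $\beta\neq 0$ the sequence $E-\beta$ is not homogeneous, so ``graded Samuel multiplicity'' is not literally available; the identity is cleanly proved by Serre's theorem after localizing at the finitely many points of $\VV(\tilde{I}+\<E-\beta\>)$, where Convention~\ref{conv:A}(3) makes each $d$-dimensional component of $\VV(\tilde{I})$ count exactly once.) The genuine gap is the reverse inclusion, which is precisely the implication ``some $H_i\neq 0$ with $i>0$ implies dimension $>\deg(I)$''. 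You correctly identify the possible cancellation in $\sum_{i\geq 1}(-1)^{i-1}\dim_\CC H_i$ as the obstacle, but you do not resolve it; you only list two avenues you ``would attempt''. Your semicontinuity observations cannot substitute for this step: knowing $f\geq\deg(I)$ everywhere and $f=\deg(I)$ off the quasidegree locus says nothing about strictness on it, and your plan to propagate strict inequality along a component presupposes exactly the unproved strictness at its generic point. As written, the proposal proves only one of the two inclusions, and the missing one is where the corollary has its content.

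The cancellation issue disappears if you compare lengths and multiplicities point by point rather than through the global alternating sum. Each $H_i(\CC[\theta]/\tilde{I};\beta)$ is annihilated by $\tilde{I}+\<E-\beta\>$, hence is supported at the finitely many points $b\in\VV(\tilde{I}+\<E-\beta\>)$; and since $\tilde{I}$ is radical, its localization at such a $b$ agrees with that of $J_b(\theta-b)$ from Definition~\ref{def:Jb}. If every $\Delta_b(I)$ is Cohen--Macaulay of dimension $d-1$, then $E-\beta$ is a regular sequence on each localization and depth sensitivity of the Koszul complex forces $H_i=0$ for all $i>0$. If some $\Delta_b(I)$ fails this, then at that point the local length of $\CC[\theta]/(\tilde{I}+\<E-\beta\>)$ strictly exceeds the local multiplicity --- the local counterpart of Proposition~\ref{thm:lsop CM}, equivalently the classical nonnegativity and vanishing criterion for the first partial Euler characteristic of a Koszul complex on a system of parameters --- and summing over $b$ yields $\dim_\CC(\CC[\theta]/(\tilde{I}+\<E-\beta\>))>\deg(I)$, since the local multiplicities sum to $\deg(I)$ by your own computation of $\chi$. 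This pointwise comparison is immune to cancellation and is the mechanism behind Theorem~\ref{thm:Deltab CM}; it is also the content of the unproved ``observation'' the paper invokes just before Theorem~\ref{thm:EK-homology}. Supplying this argument (or a precise non-cancellation statement extracted from \cite{DMM}) is what your write-up still needs.
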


\begin{proof}
This follows from
Theorem~\ref{thm:EK-homology}, as
the vanishing for all $i>0$
of the Koszul homology
$H_i( \CC[\theta]/\tilde{I} ; \beta )$ 
is equivalent to $\dim_\CC(\CC[\theta]/(\tilde{I}+\<E-\beta\>)) = \deg(I)$. 
\end{proof}

Since the vanishing of local cohomology characterizes Cohen--Macaulayness,
we have the following immediate consequence.

\begin{corollary}
\label{cor:rank CM}
Let $I \subseteq \CC[\del]$ be a monomial ideal and 
$A$ be as in Convention~\ref{conv:A}. 
The ring $\CC[\del]/I$ is Cohen--Macaulay if and only if
\begin{equation}
\label{eqn:CM-cond}
\dim_{\CC}\left( \frac{\CC[\theta]}{\tilde{I} + \< E - \beta\> }\right)
= \deg(I) \quad \quad \forall \beta \in \CC^d.
\end{equation}
\end{corollary}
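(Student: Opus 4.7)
The plan is to deduce the corollary directly from Corollary~\ref{thm:local-coh-degrees}, combined with the standard characterization of Cohen--Macaulayness via local cohomology. Since $\CC[\del]/I$ has Krull dimension $d$, Grothendieck's vanishing and nonvanishing theorems imply that $\CC[\del]/I$ is Cohen--Macaulay if and only if $H^i_\mm(\CC[\del]/I) = 0$ for every $i < d$; equivalently, the $A$-graded module $M := \bigoplus_{i<d} H^i_\mm(\CC[\del]/I)$ vanishes. Since an $A$-graded module is zero exactly when its set of true degrees is empty, and $\qdeg(M)$ is the Zariski closure of $\tdeg(M)$, this vanishing is equivalent to $\qdeg(M) = \emptyset$.

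Next I would apply Corollary~\ref{thm:local-coh-degrees}, which identifies $\qdeg(M)$ with the set of parameters $\beta \in \CC^d$ satisfying $\dim_{\CC}(\CC[\theta]/(\tilde{I}+\< E-\beta\>)) > \deg(I)$. Thus Cohen--Macaulayness of $\CC[\del]/I$ becomes equivalent to the statement that no $\beta \in \CC^d$ yields this strict inequality.

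The remaining step is to promote this ``no strict inequality'' condition to the equality~(\ref{eqn:CM-cond}). For this I would invoke the dichotomy already recorded in the proof of Corollary~\ref{thm:local-coh-degrees} (which rests on Theorem~\ref{thm:EK-homology}): the condition $\beta \notin \qdeg(M)$ is equivalent to the vanishing of $H_i(\CC[\theta]/\tilde{I};\beta)$ for all $i > 0$, which in turn is equivalent to $\dim_{\CC}(\CC[\theta]/(\tilde{I}+\< E-\beta\>)) = \deg(I)$. Therefore, for every $\beta$, the dimension is either exactly $\deg(I)$ (when $\beta \notin \qdeg(M)$) or strictly greater than $\deg(I)$ (when $\beta \in \qdeg(M)$), with no intermediate case. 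Combining this dichotomy with the previous paragraph yields the desired equivalence. I do not anticipate any real obstacle here: the substantive content is already encapsulated in Corollary~\ref{thm:local-coh-degrees}, and the only mild subtlety is recording the above dichotomy carefully so as to upgrade ``no strict inequality for any $\beta$'' to ``equality for every $\beta$.''
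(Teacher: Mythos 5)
Your argument is correct and is essentially the paper's own proof, which simply notes that Cohen--Macaulayness is characterized by the vanishing of $H^i_\mm(\CC[\del]/I)$ for $i<d$ and then invokes Corollary~\ref{thm:local-coh-degrees}. Your extra care in recording the dichotomy (equality versus strict inequality, via the Koszul-homology observation preceding Theorem~\ref{thm:EK-homology}) just makes explicit what the paper treats as immediate.
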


%%%%%%%%%%%%%%%%%%%%%%%%%%%%%%%%%%%%%%%%%
\section{Simplicial complexes and Cohen--Macaulayness of monomial ideals}%%
%%%%%%%%%%%%%%%%%%%%%%%%%%%%%%%%%%%%%%%%%
\label{sec:S-R expts}

In Theorem~\ref{thm:main}, we give a combinatorial criterion to determine when the
ring $\CC[\del]/I$ is Cohen--Macaulay. Its statement 
is in the same spirit as Reisner's well-known result in the
squarefree case: one verifies Cohen--Macaulayness by checking
that certain simplicial complexes have vanishing homology.

Given $I \subseteq \CC[\del]$ a monomial ideal,
the zero set $\VV(\tilde{I}) \subseteq \CC^n$ is a union of
translates of coordinate spaces of the form
$\CC^{\sigma} = \{ v \in \CC^n \mid v_i = 0 \; \forall i \not\in \sigma\}$ 
for certain subsets $\sigma \subseteq \{1,\dots,n\}$. 
The irreducible decomposition of $\VV(\tilde{I})$ is controlled by 
combinatorial objects called \emph{standard pairs} of $I$, introduced in \cite{stv}.

\begin{definition}
\label{def:deltab}
For any $b \in \VV(\tilde{I})$, let
\[
\Delta_b(I) = \{ \sigma \subseteq \{1,\dots,n\}
\mid b + \CC^{\sigma} \subseteq \VV(\tilde{I}) \}.
\]
This is a simplicial complex on $\{1,\dots,n\}$, 
called an \emph{exponent complex}, 
whose facets correspond to the irreducible components 
of $\VV(\tilde{I})$ that contain $b$.  
For $\beta \in \CC^d$,
the \emph{exponents} of $I$ with respect to $\beta\in\CC^d$
are the elements of $\VV(\tilde{I} + \< E - \beta \>)$.
\end{definition}

Given an exponent $b$ of $I$, $x^b$ is a solution of the
differential equations $\tilde{I} + \< E - \beta \>$.
This justifies the name exponent complex for 
$\Delta_b(I)$, as any $b \in \VV(\tilde{I})$ is an
exponent corresponding to the parameter $\beta = A \cdot b$.
Exponent complexes were introduced in \cite[Section 3.6]{SST}
for the special case when the monomial ideal $I$ is an
initial ideal of the toric ideal $I_A$.

\begin{remark}
\label{remark:finitely many}
Only finitely many simplicial complexes can occur as 
exponent complexes of our monomial ideal $I$
because the number of vertices is fixed. 
To find the collection of all exponent complexes of $I$, 
it suffices to compute $\Delta_b(I)$ at the lattice points $b$  
in the closed cube with diagonal from $(-1,\dots,-1)$ to 
the exponent of the least common multiple of 
the minimal generators of $I$, 
commonly called the \emph{join} of their exponents. 
\end{remark}

\begin{figure}[h]
\includegraphics[angle=-90]{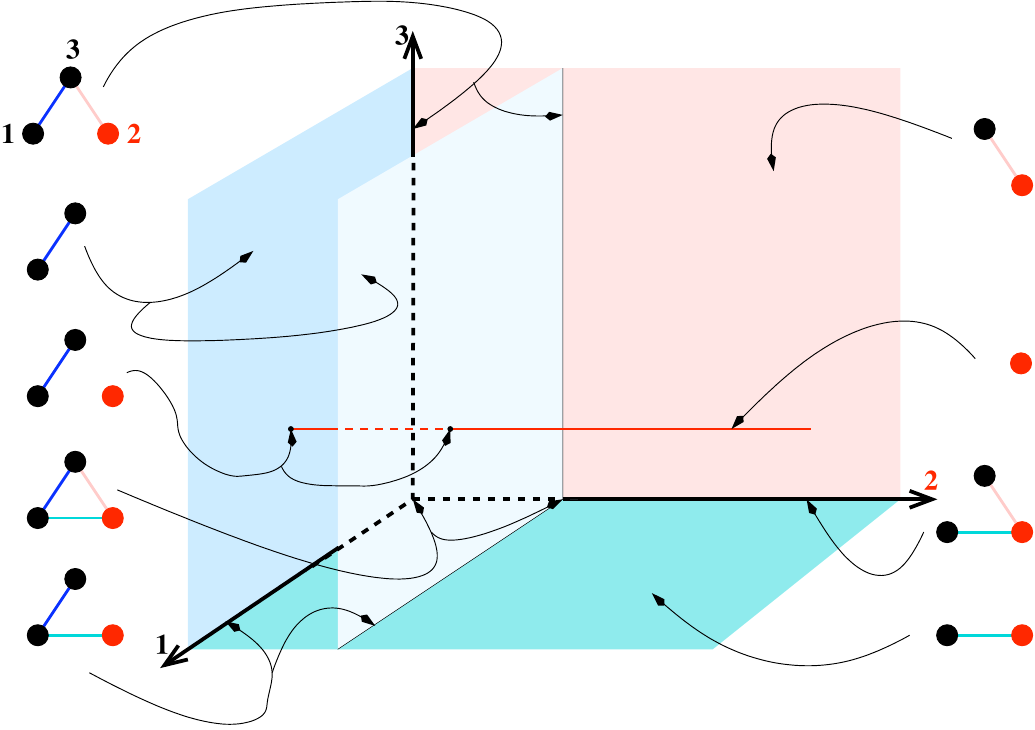}
\caption{The exponent complexes for
$I=\< \del_1^2\del_2^2\del_3,\del_1\del_2^2\del_3^2\>$.}
\label{fig1}
\end{figure}

\begin{example}
\label{ex:main}
The distraction $\tilde{I}$ of the
monomial ideal $I=\<\del_1^2\del_2^2\del_3,\del_1\del_2^2\del_3^2\>$
has five irreducible components:
\[
\CC^{\{1,2\}},\;\;
\CC^{\{2,3\}},\;\;
\CC^{\{1,3\}},\;\;
(0,1,0)+\CC^{\{1,3\}},\;\;
(1,0,1)+\CC^{\{2\}}.
\]

On the right and left of Figure~\ref{fig1} we have listed
the possible exponent complexes. The first complex on the 
left is associated to the
two vertical lines. The second is associated to the two 
$\{ 1,3\}$ planes, the third
is associated to the two points $(1,0,1)$ and $(1,1,1)$, 
the fourth is associated
to the two points $(0,0,0)$ and $(0,1,0)$, the fifth is associated to the two
lines parallel to the $1$st axis. On the right side, the first complex is
associated to the $\{2,3\}$ coordinate plane, the second is 
associated to all points
on the indicated line parallel to the $2$nd axis except for 
the previously mentioned ones,
the third complex is associated to the $2$nd axis, and the last complex is
associated to the $\{1,2\}$ coordinate plane.
\end{example}

\begin{example}
\label{ex:small}
Consider the monomial ideal
$I = \< \del_1^3 \del_2, \del_1^2 \del_2^2 \> \subseteq \CC[ \del_1, \del_2 ]$,
which has primary decomposition
$I = \< \del_1^2 \> \cap \< \del_2 \> \cap \< \del_1^3, \del_2^2 \>$.
The distraction of $I$ is 
\[
\tilde{I} = \< \theta_1(\theta_1-1)(\theta_1-2) \theta_2, 
\theta_1(\theta_1-1) \theta_2(\theta_2-1) \> 
\subseteq \CC[ \theta_1, \theta_2],
\] 
whose zero set $\VV(\tilde{I})$ is shown in Figure~\ref{fig:abc}(a), 
with the irreducible components labeled.

\begin{figure}[h]
%\hspace{-12cm}
%\includegraphics{cropped2.pdf}
\includegraphics[height=4.65cm]{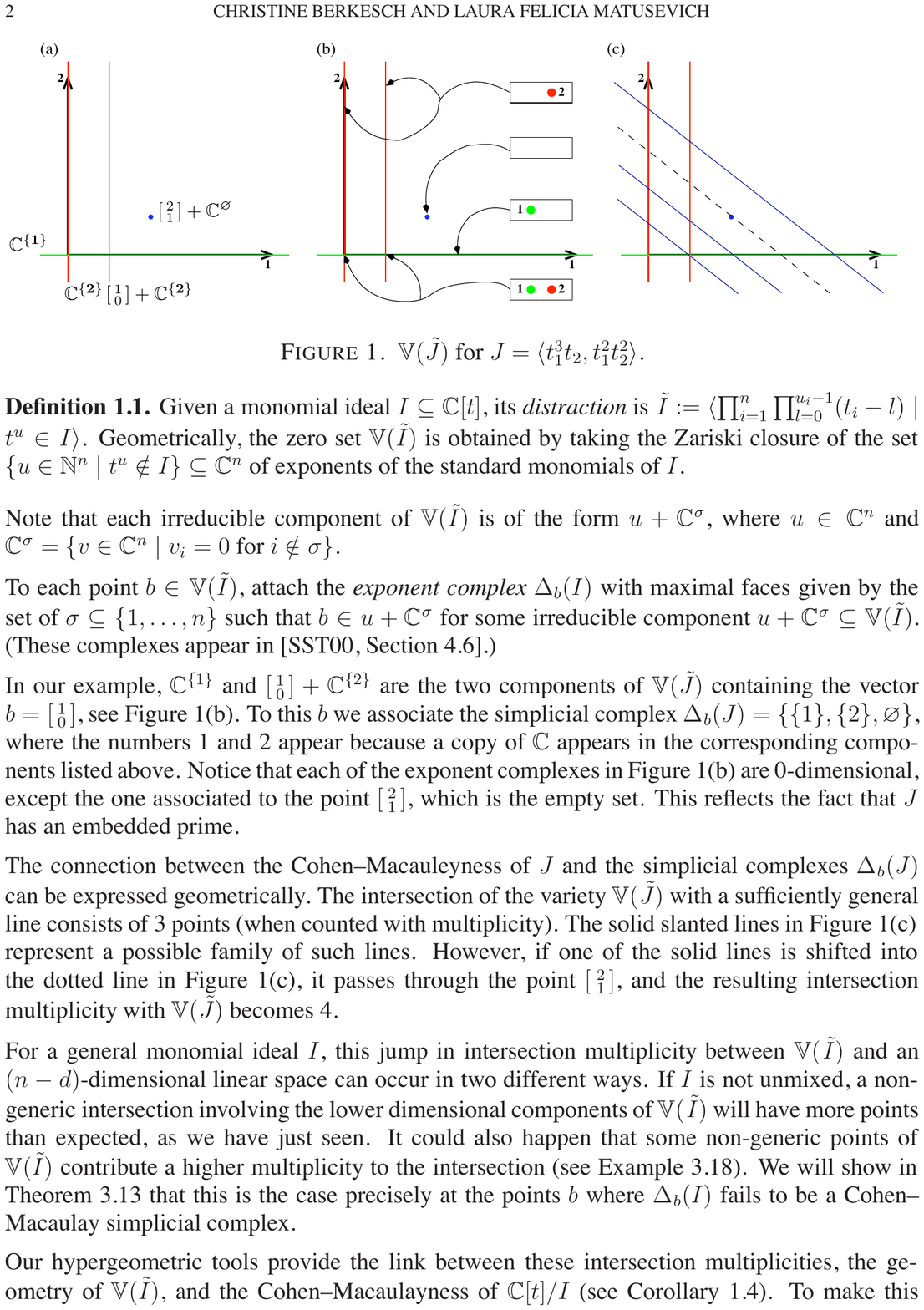}
\caption{$\VV(\tilde{I})$ for $I=\< \del_1^3 \del_2, \del_1^2 \del_2^2 \>$.}
\label{fig:abc}
\end{figure}

In this example, $\CC^{\{1\}}$ and
$
\left[ \begin{smallmatrix} 1 \\ 0 \end{smallmatrix} \right] + \CC^{ \{ 2 \} }
$
are the two components of
$\VV(\tilde{I})$ that contain the vector
$
b = \left[ \begin{smallmatrix} 1 \\ 0 \end{smallmatrix} \right]
$, 
see Figure~\ref{fig:abc}(b).
To this $b$ we associate the simplicial complex
$\Delta_b(I) = \{ \{ 1 \}, \{ 2 \}, \nothing \}$,
where the numbers 1 and 2 appear because a copy of $\CC$
appears in the corresponding components listed above.
Notice that all of the exponent complexes in Figure~\ref{fig:abc}(b)
are 0-dimensional, except for the one associated to 
$
\left[ \begin{smallmatrix} 2 \\ 1 \end{smallmatrix} \right]
$,
which is the empty set and arises because $I$ has an embedded prime.

The connection between the Cohen--Macaulayness of $I$ 
and the Euler operators 
can be expressed geometrically. 
The intersection of the variety $\VV(\tilde{I})$ with a
sufficiently general line consists of 3 points
(when counted with multiplicity). 
The solid slanted lines in Figure~\ref{fig:abc}(c)
represent a family of such lines, 
given by $\VV(\< E - \beta \>)$ when $A = [ \ 1 \ 1 \ ]$.
When $\beta = 3$, 
$\VV(\< E - \beta \>)$ (the dotted line in 
Figure~\ref{fig:abc}(c)) passes through the point 
$\left[ \begin{smallmatrix} 2\\1 \end{smallmatrix} \right]$. 
This results in an intersection multiplicity of 4 with $\VV(\tilde{I})$,
confirming the non-Cohen--Macaulayness of $I$.
\end{example}

For a general monomial ideal $I$, such a jump in intersection multiplicity
between $\VV(\tilde{I})$ and the 
$(n-d)$-dimensional linear space $\VV(\< E - \beta \>)$ 
can occur in two different ways. 
As we have just witnessed, if $I$ is not unmixed, a non-generic intersection
involving a lower dimensional component of
$\VV(\tilde{I})$ will have more points than expected,
as we have just seen.
Also, even if $I$ is unmixed, 
some non-generic points of
$\VV(\tilde{I})$ could contribute a higher multiplicity to the intersection
(see Example~\ref{ex:radCM}).
We show in Theorem~\ref{thm:Deltab CM} that this is the case
precisely at the points $b$ where $\Delta_b(I)$
fails to be a Cohen--Macaulay simplicial complex.

\begin{definition}
Given a simplicial complex $\Delta$ on $\{1,\dots,n\}$,
the \emph{Stanley-Reisner ring} or \emph{face ring} of $\Delta$
is $\CC[\theta]/I_{\Delta}$, where
\[
I_{\Delta} := \bigcap\ \< \theta_j : j \notin \sigma \>
\]
and the intersection runs over the facets $\sigma$ of $\Delta$.
Note that $I_{\Delta}$ is a squarefree monomial ideal, and
any squarefree monomial ideal can be obtained in this manner.
The Krull dimension of $\CC[\theta]/I_{\Delta}$
is the maximal cardinality of a facet of $\Delta$,
hence $\dim(\CC[\theta]/I_{\Delta}) = \dim(\Delta)+1$.
\end{definition}

\begin{definition}
\label{def:Jb}
For an exponent $b$ of $I$, let $J_b := I_{\Delta_b(I)} \subseteq \CC[\theta]$
denote the Stanley--Reisner ideal of the exponent complex $\Delta_b(I)$, and
$J_b(\theta - b)$ denote the ideal obtained from $J_b$ by replacing
$\theta_i$ with $\theta_i - b_i$.
Note that
\[
J_b(\theta - b)
= \bigcap \< \theta_i - b_i \mid i \notin \sigma \>,
\]
where the intersection runs over the facets of $\Delta_b(I)$, equivalently,
over the $\sigma \subseteq \{ 1,\dots,n \}$ such that $b+\CC^{\sigma}$ 
is an irreducible component of $\VV(\tilde{I})$.
\end{definition}

We now recall Reisner's criterion for the Cohen--Macaulayness of
Stanley--Reisner rings; proofs of this fact can be found in 
\cite[Corollary~5.3.9]{bruns-herzog}, \cite[Section II.4]{stanley-green-book}, 
and \cite[Section~13.5.2]{MS05}.

\begin{definition}
Let $\Delta$ be a simplicial complex and $\sigma\in\Delta$.
The \emph{link} of $\sigma$ is the simplicial complex
\[
\link \sigma =
\{ \tau \in \Delta \mid \sigma \cup \tau \in \Delta, \sigma \cap \tau = \nothing \}.
\]
\end{definition}
\vspace{.005cm}

\begin{theorem}
\label{thm:reisner}
Given a simplicial complex $\Delta$, its Stanley--Reisner
ring $\CC[\theta]/I_{\Delta}$
is Cohen--Macaulay
if and only if
$\wt{H}_i(\link \sigma;\CC) = 0$
for all $\sigma\in\Delta$ and for all $i < \dim ( \link \sigma)$.
\end{theorem}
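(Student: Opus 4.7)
The plan is to reduce Cohen--Macaulayness of $\CC[\theta]/I_\Delta$ to the vanishing of reduced (co)homology of links of $\Delta$ via Hochster's formula for the $\ZZ^n$-graded pieces of local cohomology of a Stanley--Reisner ring.

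First, I would use the standard characterization: $\CC[\theta]/I_\Delta$ is Cohen--Macaulay if and only if $H^i_\mm(\CC[\theta]/I_\Delta) = 0$ for all $i < d$, where $d = \dim\Delta + 1$ and $\mm = \< \theta_1,\dots,\theta_n\>$. Since $\CC[\theta]/I_\Delta$ is naturally $\ZZ^n$-graded, its local cohomology modules are too, and they may be computed from the Čech complex on the $\theta_j$'s. The key combinatorial input is that the localization $(\CC[\theta]/I_\Delta)[\theta^{-1}_\sigma]$ is nonzero precisely when $\sigma \in \Delta$, and its graded piece in degree $a \in \ZZ^n$ is one-dimensional exactly when $\text{supp}(a_+) \cup \sigma \in \Delta$ and $\text{supp}(a_-) \subseteq \sigma$.

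Second, I would extract Hochster's formula from this analysis. A careful accounting shows that $H^i_\mm(\CC[\theta]/I_\Delta)$ is concentrated in squarefree negative degrees $-\tau$ with $\tau \in \Delta$, and that the degree $-\tau$ component of the Čech complex is isomorphic (after reindexing $\sigma \mapsto \sigma \minus \tau$) to the reduced cochain complex of the link $\link \tau$ shifted by $|\tau|+1$. The resulting identification is
\[
\dim_\CC H^i_\mm(\CC[\theta]/I_\Delta)_{-\tau} = \dim_\CC \wt{H}^{\,i-|\tau|-1}(\link \tau;\CC).
\]

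Third, combining these two ingredients, Cohen--Macaulayness is equivalent to $\wt{H}^{\,j}(\link \tau;\CC) = 0$ for every $\tau \in \Delta$ and every $j < d - |\tau| - 1$. Reduced cohomology automatically vanishes above the dimension of the complex, so this is the same as vanishing for $j < \dim(\link \tau)$ provided that $\dim(\link \tau) = d - |\tau| - 1$ for every $\tau$, i.e., provided $\Delta$ is pure. But purity is itself a consequence of the Cohen--Macaulay condition (and, conversely, of the link-vanishing condition applied to suitable faces), so the two formulations agree. Finally, over the field $\CC$ reduced homology and reduced cohomology of a finite complex have the same dimensions, giving the statement as written.

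The main obstacle is step two, the explicit identification of the degree $-\tau$ part of the Čech complex with the shifted reduced simplicial cochain complex of $\link \tau$; the signs in the Čech differential must be matched against the simplicial coboundary, and one must also verify that nonsquarefree and non-negative multidegrees contribute nothing. An alternative route would invoke Gr\"obner or Koszul resolutions to compute $\ext^i_{\CC[\theta]}(\CC[\theta]/I_\Delta,\CC[\theta])$ and dualize via local duality, but the direct Čech computation is the most transparent path to the link description appearing in the statement.
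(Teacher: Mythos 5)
The paper does not actually prove Theorem~\ref{thm:reisner}: it recalls it as a classical result and cites Bruns--Herzog, Stanley, and Miller--Sturmfels, whose proofs run exactly along the lines you propose, namely the graded local cohomology characterization of Cohen--Macaulayness combined with Hochster's formula computed from the \v{C}ech complex on the variables. So your route is the standard one and is correct in outline. Two small points deserve tightening. First, $H^i_\mm(\CC[\theta]/I_\Delta)$ is \emph{not} concentrated in squarefree degrees: it is nonzero in all degrees $-a$ with $a\in\NN^n$ and $\mathrm{supp}(a)\in\Delta$ (already $H^1_\mm(\CC[\theta_1])$ shows this); what is true is that the degree $-a$ piece depends only on $\mathrm{supp}(a)$ and is isomorphic to $\wt{H}^{\,i-|\mathrm{supp}(a)|-1}(\link \mathrm{supp}(a);\CC)$, while degrees with a positive coordinate contribute nothing, so the vanishing criterion you extract is unaffected. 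Second, the passage from the Hochster-type condition ``$\wt{H}^{\,j}(\link\tau;\CC)=0$ for $j<d-|\tau|-1$'' to the statement's ``$j<\dim(\link\tau)$'' genuinely requires the lemma that the link-vanishing condition forces $\Delta$ to be pure (equivalently $\dim(\link\tau)=d-|\tau|-1$ for all $\tau$); you assert this parenthetically, but it is the one nontrivial combinatorial step in the reduction and should be argued, e.g.\ by induction on dimension using connectedness of the links.
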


\begin{definition}
\label{def:CM-complex}
A simplicial complex $\Delta$ is \emph{Cohen--Macaulay} if and only
if its Stanley--Reisner ring is Cohen--Macaulay; equivalently, when
$\Delta$ satisfies the condition in Theorem~\ref{thm:reisner}.
\end{definition}

We will use the following fact in the proof of Theorem~\ref{thm:Deltab CM}.

\begin{proposition}{\cite[Corollary 4.6.11]{bruns-herzog}}
\label{thm:lsop CM}
Let $J$ be a homogeneous ideal in $\CC[\theta]$.
If $E$ is a linear system of parameters for $\CC[\theta]/J$, then
\[
\deg(J) \leq \deg(J+\< E \>) = \dim_{\CC}(\CC[\theta]/(J+\< E \>)),
\]
and equality holds if and only if $J$ is Cohen--Macaulay.
\end{proposition}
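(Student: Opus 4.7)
The plan is to view $R := \CC[\theta]/J$ as a finitely generated graded module over the polynomial subring $\CC[E] := \CC[E_1,\dots,E_d]$ and then to extract both the inequality and the Cohen--Macaulay characterization from the Auslander--Buchsbaum formula.

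First I would dispose of the right-hand equality. Since $E$ is a linear system of parameters for $R$, the quotient $\CC[\theta]/(J+\<E\>)$ has Krull dimension $0$, so its Hilbert polynomial is the constant function $\dim_{\CC} R/ER$. By the convention for $\deg$ fixed in the paper, this constant is precisely $\deg(J+\<E\>)$.

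For the inequality I would set up two identifications. Writing the Hilbert series as $H_R(t)=Q(t)/(1-t)^d$ with $Q(t)\in\ZZ[t]$ and $Q(1)=\deg(J)$, the rank of $R$ as a graded $\CC[E]$-module is computed by
\[
\mathrm{rank}_{\CC[E]}(R) = \bigl[(1-t)^d H_R(t)\bigr]_{t=1} = Q(1) = \deg(J).
\]
On the other hand, graded Nakayama's lemma identifies $\dim_{\CC}R/ER$ with $\mu_{\CC[E]}(R)$, the minimum number of homogeneous generators of $R$ over $\CC[E]$. For any finitely generated graded module $M$ over $\CC[E]$ one has $\mathrm{rank}(M)\le \mu(M)$, with equality if and only if $M$ is $\CC[E]$-free; this immediately yields the displayed inequality and reduces the equality criterion to the freeness of $R$ over $\CC[E]$.

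For the Cohen--Macaulay characterization I would invoke Auslander--Buchsbaum: since $\CC[E]$ is a polynomial ring of dimension $d$, $R$ is $\CC[E]$-free iff $\mathrm{pd}_{\CC[E]}(R)=0$ iff $\mathrm{depth}_{\CC[E]}(R)=d$. The depth of $R$ over $\CC[E]$ relative to the irrelevant ideal coincides with the depth of $R$ with respect to its own graded maximal ideal $\mm_R$, because the condition that $R/ER$ be Artinian forces $\mm_R$ to equal the radical of $ER$. This common value being $d=\dim(R)$ is exactly the Cohen--Macaulay condition on $R$. The main obstacle I expect, deserving careful bookkeeping rather than new ideas, is the transition from $\CC[E]$-depth to $R$-depth, together with the verification that the Hilbert-series manipulation really does compute the $\CC[E]$-rank in the graded setting; both are standard, but they are what turn the numerical comparison into the homological statement about freeness.
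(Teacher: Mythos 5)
Your proof is correct, but be aware that the paper itself offers no argument for this proposition: it is quoted verbatim from Bruns--Herzog, Corollary 4.6.11, where it is deduced from the general theory of Hilbert--Samuel multiplicities of parameter ideals (the inequality between $e(\qq;M)$ and the length of $M/\qq M$, with equality characterizing Cohen--Macaulayness). Your route is the standard graded alternative, essentially Stanley's ``free over a linear system of parameters'' criterion: treat $R=\CC[\theta]/J$ as a finite graded module over the polynomial subring $\CC[E]$, identify $\deg(J)$ with the generic rank of $R$ over $\CC[E]$ via $\bigl[(1-t)^d H_R(t)\bigr]_{t=1}$, identify $\dim_\CC R/ER$ with the minimal number of generators by graded Nakayama, note that rank is at most the number of generators with equality exactly when $R$ is $\CC[E]$-free (the kernel of a minimal presentation is a rank-zero submodule of a free module, hence zero), and convert freeness into Cohen--Macaulayness by the graded Auslander--Buchsbaum formula together with the depth transfer along the finite extension $\CC[E]\subseteq R$, which is legitimate because $\sqrt{ER}=\mm_R$. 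All the steps you flag as bookkeeping (the rank computation from the Hilbert series and the comparison of $\CC[E]$-depth with $R$-depth) do go through in this graded setting. What your approach buys is a self-contained proof exploiting that the $E_i$ are linear forms in the standard grading, which is exactly the situation of the paper; the Bruns--Herzog statement is more general (local rings, arbitrary parameter ideals) at the cost of multiplicity machinery. One cosmetic point: the Hilbert function of the Artinian quotient is eventually zero, so the equality $\deg(J+\<E\>)=\dim_\CC\bigl(\CC[\theta]/(J+\<E\>)\bigr)$ is the usual length convention for zero-dimensional degree, which is consistent with the paper's normalization $P(z)=\frac{m}{d!}z^d+\cdots$ of the (cumulative) Hilbert polynomial.
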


The following result
is an adaptation of \cite[Theorem~4.6.1]{SST}.

\begin{theorem}
\label{thm:Deltab CM}
Given a monomial ideal $I\subseteq \CC[\del]$ and $\beta \in \CC^d$,
\[
\dim_\CC \left( \frac{\CC[\theta]}{\tilde{I} + \< E - \beta \>} \right) = \deg(I)
\]
if and only if $\Delta_b(I)$ is a Cohen--Macaulay complex of
dimension $d-1$ for all $b \in \VV(\tilde{I}+\<E - \beta\>)$.
\end{theorem}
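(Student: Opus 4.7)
The plan is to decompose both sides of the equality as sums over $b \in \VV(\tilde I+\langle E-\beta\rangle)$ and to reduce each local summand to the homogeneous degree inequality of Proposition~\ref{thm:lsop CM}. Since Convention~\ref{conv:A}(3) and Lemma~\ref{lemma:all-holonomic} make $\tilde I+\langle E-\beta\rangle$ a zero-dimensional ideal, the Chinese Remainder Theorem yields
\[
\dim_\CC\bigl(\CC[\theta]/(\tilde I+\langle E-\beta\rangle)\bigr) = \sum_b \ell_b, \qquad \ell_b := \dim_\CC\bigl(\CC[\theta]_{\mm_b}/(\tilde I+\langle E-\beta\rangle)_{\mm_b}\bigr),
\]
where $\mm_b = \langle \theta-b\rangle$. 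I aim to show that $\ell_b \geq N_b$, with $N_b$ the number of $d$-element facets of $\Delta_b(I)$, and with equality if and only if $\Delta_b(I)$ is Cohen--Macaulay of dimension $d-1$. The identity $\sum_b N_b = \deg(I)$ will follow from Convention~\ref{conv:A}(3), which forces each top-dimensional component $b+\CC^\sigma$ of $\VV(\tilde I)$ to meet $\VV(\langle E-\beta\rangle)$ transversally in the single point $b$.

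For the local comparison at a fixed $b\in \VV(\tilde I+\langle E-\beta\rangle)$, the condition $Ab=\beta$ means that the substitution $\theta\mapsto \theta+b$ converts $E-\beta$ into the honest homogeneous Euler operators $E$ and $J_b(\theta-b)$ into the Stanley--Reisner ideal $J_b = I_{\Delta_b(I)}$ from Definition~\ref{def:Jb}. The minimal primes of $\tilde I$ contained in $\mm_b$ are precisely the defining primes of the components $b+\CC^\sigma$ for the facets $\sigma$ of $\Delta_b(I)$, matching the minimal primes of $J_b(\theta-b)$; the factored form of the generators $[\theta]_u$ of $\tilde I$ forces each of these primary components to have multiplicity one. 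Convention~\ref{conv:A}(3) ensures that, when $\dim\Delta_b(I)=d-1$, the images of $E$ form a linear system of parameters for $\CC[\theta]/J_b$, so Proposition~\ref{thm:lsop CM} gives
\[
N_b = \deg(J_b) \leq \dim_\CC\bigl(\CC[\theta]/(J_b+\langle E\rangle)\bigr),
\]
with equality if and only if $\Delta_b(I)$ is Cohen--Macaulay. Convention~\ref{conv:A}(3) also forces the only point of $\VV(J_b+\langle E\rangle)$ to be the origin, so the right-hand side coincides with the local length at $b$ of the shifted Stanley--Reisner ring, and the primary decomposition identification gives $\ell_b = \dim_\CC(\CC[\theta]/(J_b+\langle E\rangle))$.

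The remaining case $\dim\Delta_b(I)<d-1$ is handled directly: such a $b$ lies on no $d$-dimensional component of $\VV(\tilde I)$, so $N_b=0$, yet $\ell_b\geq 1$ because $b$ lies in the support. Consequently each such $b$ strictly inflates the total length above $\deg(I)$, forcing $\dim\Delta_b(I)=d-1$ in the equality case. The main obstacle is the local bookkeeping invoked above: verifying that the primary components of $\tilde I$ at its minimal primes contribute multiplicity exactly one, and separating their contribution to $\ell_b$ from any lower-dimensional components of $\VV(\tilde I)$ through $b$ when $\dim\Delta_b(I)<d-1$. Once this is in hand, summing $\ell_b\geq N_b$ and combining with $\sum_b N_b = \deg(I)$ yields the desired equivalence: equality holds globally if and only if $\ell_b = N_b$ at every $b$, which is if and only if each $\Delta_b(I)$ is Cohen--Macaulay of dimension $d-1$.
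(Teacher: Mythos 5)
Your proof is correct and follows essentially the same route as the paper: both decompose $\dim_\CC\bigl(\CC[\theta]/(\tilde{I}+\langle E-\beta\rangle)\bigr)$ as a sum over the exponents $b$ of $\dim_\CC\bigl(\CC[\theta]/(J_b+\langle E\rangle)\bigr)$ and then apply Proposition~\ref{thm:lsop CM} at the top-dimensional $b$'s, with lower-dimensional exponents forcing strict inequality. Your localization/CRT bookkeeping (multiplicity one at the minimal primes of $\tilde{I}$, and $\VV(J_b+\langle E\rangle)=\{0\}$ via Convention~\ref{conv:A}(3)) merely makes explicit the intersection decomposition that the paper states without proof.
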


\begin{proof}
Since 
$\tilde{I} +\< E-\beta\> = 
\bigcap_{b\in \VV(\tilde{I} + \< E - \beta \>)} J_b(\theta - b) + \< E-\beta\>$,
\[
\dim_{\CC}(\CC[\theta]/(\tilde{I}+\<E - \beta\>)) =
\sum_{b \in \VV(\tilde{I}+\<E - \beta \>)} \dim_{\CC}(\CC[\theta]/(J_b+\<E\>)).
\]
Proposition~\ref{thm:lsop CM} implies that 
for any $b\in\VV(\tilde{I} + \< E - \beta\>)$
such that $\dim(\CC[\theta]/(J_b)) = d$,
\[
\dim_{\CC}(\CC[\theta]/(J_b+\<E\>)) \geq \deg(J_b),
\]
with equality if and only if $J_b$ is Cohen--Macaulay.
By summing over the exponents $b$ of $I$ corresponding to $\beta$,
we obtain the inequality
\[
\dim_{\CC}(\CC[\theta]/(\tilde{I}+\<E - \beta\>))
=
\sum_{b \in \VV(\tilde{I}+\<E - \beta \>)} \dim_{\CC}(\CC[\theta]/(J_b+\<E\>))
\geq
\sum_{\dim(\CC[\theta]/J_b) = d} \deg(J_b).
\]
Notice that the right hand sum is
\[
\sum_{\dim(\CC[\theta]/J_b) = d} \deg(J_b) = \deg(I)
\]
because the degree of a monomial ideal is equal to
the number of top dimensional irreducible components
of its distraction.
Therefore
\[
\dim_{\CC} \left( \frac{\CC[\theta]}{\tilde{I}+\<E - \beta\>} \right) 
\geq \deg(I),
\]
with equality if and only if
for all $b\in\VV(\tilde{I}+\<E - \beta\>)$ with 
$\dim(\CC[\theta]/J_b) = d$, we have that 
$\dim_{\CC}(\CC[\theta]/(J_b+\<E\>)) = \deg(J_b)$.
If $\dim(\CC[\theta]/J_b)=d$, then $E$ is a linear system of parameters for 
$J_b$ and
\[
\dim_{\CC}(\CC[\theta]/(J_b+\<E\>)) \geq \deg(J_b),
\]
with equality if and only if $\CC[\theta]/J_b$ is
Cohen--Macaulay, by Proposition~\ref{thm:lsop CM}.
Hence we have
$\dim_{\CC}(\CC[\theta]/(\tilde{I}+\<E-\beta\>)) = \deg(I)$
exactly when
$\CC[\theta]/J_b$ is Cohen--Macaulay of Krull dimension $d$
for all $b \in \VV(\tilde{I}+\< E - \beta \>$.
Finally, recall that by definition
$\Delta_b(I)$ is Cohen--Macaulay of dimension $d - 1$
when $J_b = I_{\Delta_b(I)}$ is Cohen--Macaulay of dimension $d$.
\end{proof}

The main result of this section
now follows directly from Corollary~\ref{cor:rank CM} and
Theorem~\ref{thm:Deltab CM}. 
Recall from Remark~\ref{remark:finitely many} that there are finitely many 
exponent complexes $\Delta_b(I)$ that can be obtained after 
consideration of finitely many exponents $b\in \ZZ^n$.

\smallskip

\begin{theorem}
\label{thm:main}
A $d$-dimensional monomial ideal $I$ is Cohen--Macaulay if and only if
its (finitely many) 
exponent complexes are Cohen--Macaulay of dimension $d-1$. 
\end{theorem}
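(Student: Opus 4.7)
The plan is to assemble Theorem~\ref{thm:main} as a direct combination of Corollary~\ref{cor:rank CM}, Theorem~\ref{thm:Deltab CM}, and Remark~\ref{remark:finitely many}, so the proof is essentially a two-line deduction followed by a careful finiteness argument.

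First I would apply Corollary~\ref{cor:rank CM} to translate the Cohen--Macaulayness of $\CC[\del]/I$ into the numerical statement
\[
\dim_{\CC}\!\left(\frac{\CC[\theta]}{\tilde{I} + \< E - \beta\>}\right) = \deg(I) \quad \text{for all } \beta \in \CC^d.
\]
Next, for each fixed $\beta$, I would invoke Theorem~\ref{thm:Deltab CM} to replace this equality by the condition that $\Delta_b(I)$ is Cohen--Macaulay of dimension $d-1$ for every $b \in \VV(\tilde{I} + \< E - \beta\>)$. Since every $b \in \VV(\tilde{I})$ occurs in $\VV(\tilde{I} + \< E - \beta\>)$ for the parameter $\beta = A\cdot b$ (as noted right after Definition~\ref{def:deltab}), and conversely every $b$ arising in any such fiber lies in $\VV(\tilde{I})$, quantifying over all $\beta$ amounts to quantifying over all $b \in \VV(\tilde{I})$. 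Thus $\CC[\del]/I$ is Cohen--Macaulay if and only if $\Delta_b(I)$ is Cohen--Macaulay of dimension $d-1$ for every $b \in \VV(\tilde{I})$.

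The only remaining issue is the word \emph{finitely many}. For this I would cite Remark~\ref{remark:finitely many}: although $\VV(\tilde{I})$ may have infinitely many points, the collection $\{\Delta_b(I) : b \in \VV(\tilde{I})\}$ has only finitely many distinct members (since each is a simplicial complex on the fixed vertex set $\{1,\dots,n\}$), and in fact a complete list is obtained by evaluating $\Delta_b(I)$ at the lattice points $b$ inside the box whose diagonal runs from $(-1,\dots,-1)$ to the join of the exponents of the minimal generators of $I$. This reduces the infinite quantification over $b \in \VV(\tilde{I})$ to a finite check, yielding the statement of Theorem~\ref{thm:main}.

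I do not anticipate a genuine obstacle here, since the hard content has already been established in Theorems~\ref{thm:EK-homology} and~\ref{thm:Deltab CM} and in Corollary~\ref{cor:rank CM}. The only point that requires a moment of care is the equivalence between ``for all $\beta$, and for all $b$ in the corresponding fiber'' and ``for all $b \in \VV(\tilde{I})$'', which is clean precisely because Convention~\ref{conv:A}(3) guarantees that $\CC[\theta]/(\tilde{I}+\<E-\beta\>)$ is Artinian and because the map $b \mapsto A\cdot b$ surjects the exponents of $I$ onto their parameters.
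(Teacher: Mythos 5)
Your proposal is correct and follows exactly the paper's route: the theorem is deduced by combining Corollary~\ref{cor:rank CM} with Theorem~\ref{thm:Deltab CM}, with Remark~\ref{remark:finitely many} supplying the finiteness of the list of exponent complexes. Your extra remark spelling out why quantifying over all $\beta\in\CC^d$ and all $b$ in the corresponding fibers is the same as quantifying over all $b\in\VV(\tilde{I})$ is a harmless (and correct) elaboration of what the paper leaves implicit.
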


\smallskip

\begin{remark}
If $I=I_{\Delta}$ is squarefree and $\Delta_0(I)=\Delta$ is Cohen--Macaulay, 
then Theorem~\ref{thm:reisner} and Theorem~\ref{thm:main} 
together imply that all exponent complexes of $I$ are Cohen--Macaulay of
dimension $d-1$.
\end{remark}

\begin{example}[Example~\ref{ex:main}, continued]
The third simplicial complex on the left in 
Figure~\ref{fig1} is not Cohen--Macaulay
by Theorem~\ref{thm:reisner}
because the link of the empty set is one-dimensional and
$\wt{H}_0(\link \nothing ; \CC) = \CC \neq 0$.
It now follows from Theorem~ \ref{thm:main} that $I$ is not Cohen--Macaulay.
\end{example}

\begin{example}[Example~\ref{ex:small}, continued]
\label{ex:small-continued}
For $b = \left[ \begin{smallmatrix} 2\\1 \end{smallmatrix} \right]$, 
$\Delta_b(I)$ is empty, while $d = 2$. 
Thus by Theorem~\ref{thm:main}, $I$ is not Cohen--Macaulay. 
\end{example}

It is well-known that the Cohen--Macaulay property of 
a monomial ideal is inherited
by its radical (see \cite[Proposition~3.1]{taylor} or \cite[Theorem~2.6]{HTT});
however, the converse is not true. 
Theorem~\ref{thm:main} provides the conditions necessary to obtain a converse,
as $\Delta_0(I)$ is the Stanley--Reisner complex for the radical of $I$. 

\begin{example}
\label{ex:radCM}
Consider the monomial ideal
\[
I = \< \del_1\del_4, \del_2\del_4, \del_2\del_3, 
\del_1\del_3\del_5, \del_5^2 \> \subseteq \CC[\del_1,\dots,\del_5].
\]
The irreducible components of $\VV(\tilde{I})$ are
\[
\CC^{\{1,2\}}, \;\; \CC^{\{1,3\}}, \;\; \CC^{\{3,4\}}, \;\; 
(0,0,0,0,1) + \CC^{\{1,2\}}, \;\;
(0,0,0,0,1) + \CC^{\{3,4\}}.
\]
For $b=(0,0,0,0,1)$,
the simplicial complex $\Delta_b(I)$ consists of two line segments, 
$\{ 1,2\}$ and $\{3,4 \}$.
The link of the empty set is all of $\Delta_b(I)$,
a one-dimensional simplicial complex
with nonvanishing zeroth reduced homology;
this implies that $\Delta_b(I)$ is not Cohen--Macaulay
by Theorem~\ref{thm:reisner}.
Thus, while $I$ is unmixed and $\sqrt{I}$ is Cohen--Macaulay,
Theorem~\ref{thm:main} implies that $I$ is not Cohen-Macaulay.
\end{example}

By introducing new variables, 
one can pass from the monomial ideal $I \subseteq \CC[\del]$ 
to its \emph{polarization}, a squarefree monomial ideal $I_\Delta$ 
in a larger polynomial ring $S$ such that 
$S/(I_\Delta+\< y \>) \cong \CC[\del]/I$ 
for some regular sequence $y$ in $S/I_\Delta$. 
In particular, $I$ is Cohen--Macaulay if and only if 
the single simplicial complex $\Delta$ is Cohen--Macaulay. 
However, the number of variables in $S$ can make the application of 
Theorem~\ref{thm:reisner} to $\Delta$ much more 
computationally expensive than checking the Cohen--Macaulayness 
of the finitely many $\Delta_b(I)$ of Theorem~\ref{thm:main}. 
This is the case in the following family of monomial ideals 
from \cite{cm examples}.

\begin{example}
\label{ex:polarization}
Given $k \in \NN$, let $m_i = \prod_{j \neq i} \del_j^k$, and
consider the ideal 
\[
I_k = \< m_1,\dots, m_n\> \subseteq \CC[\del_1,\dots,\del_n].
\]
Jarrah shows that $\CC[\del]/I_k$ is Cohen--Macaulay
by explicitly constructing the minimal free resolution. 
Let us verify this fact combinatorially. 

By Theorem~\ref{thm:main}, we only need
to check that $\CC[\del]/I_1$ is Cohen--Macaulay,
as the exponent complexes of $I_k$ and $I_1$ are the same.
But $I_1$ is squarefree, so it is enough to show
that $\Delta_0(I_1)$ is a Cohen--Macaulay simplicial 
complex (of dimension $n-3$). 
Now
\[
\Delta_0(I_1)=
\{ 
\sigma \subseteq \{1,\dots,n\} \ \big| \ |\sigma| \leq n-2
\}.
\]
The link of $\sigma \in \Delta_0(I_1)$ consists of  
subsets of $\{1,\dots,n\}\minus \sigma$
of cardinality at most $n-2-|\sigma|$. 
This simplicial complex is homotopy equivalent to a
wedge of $(n-3-|\sigma|)$-spheres, whose
only nonvanishing reduced homology occurs in the top degree.
Thus $\Delta_0(I_1)$ is Cohen--Macaulay.

On the other hand, the polarization of $I_k$ is much less transparent.
For instance, if $k=4$ and $n=4$, the polarization of 
$I_4=\<\del_1^4\del_2^4\del_3^4,\del_1^4\del_2^4\del_4^4,
\del_1^4\del_3^4\del_4^4,\del_2^4\del_3^4\del_4^4 \>$ is  
\begin{align*}
I_{\Delta} = \
& \< 
s_1s_2s_3s_4t_1t_2t_3t_4u_1u_2u_3u_4, 
s_1s_2s_3s_4t_1t_2t_3t_4v_1v_2v_3v_4, \\
& \ \ s_1s_2s_3s_4u_1u_2u_3u_4v_1v_2v_3v_4, 
t_1t_2t_3t_4u_1u_2u_3u_4v_1v_2v_3v_4 
\> \\
& \subseteq \CC[s_1,s_2,s_3,s_4,t_1,t_2,t_3,t_4,u_1,u_2,u_3,u_4,v_1,v_2,v_3,v_4], 
\end{align*}
where, for example, 
$\del_1^4$ has been replaced by $s_1s_2s_3s_4$. 
The $f$-vector $(f_{-1},f_0,f_1,\dots)$ of $\Delta$ is 
\[
f(\Delta) = 
(1,16, 120, 560, 1820, 4368, 8008, 11440, 12870, 11440, 8008, 4368, 1816, 544, 96), 
\]
where $f_i$ is the number of faces of $\Delta$ of dimension $i$. 
Applying Reisner's criterion
to verify the Cohen--Macaulayness of
$\Delta$ is computationally expensive, 
especially in comparison with verification via 
Theorem~\ref{thm:main}, which involves only $1$-dimensional
simplicial complexes on four vertices.
\end{example}

\begin{remark}
\label{remark:takayama complexes}
There are several known families of simplicial complexes 
that capture homological properties of $I$,
including those in \cite[Theorems~5.3.8 and~5.5.1]{bruns-herzog}, 
\cite{mustata}, \cite[Theorems~1.34 and~5.11]{MS05}, and \cite{takayama05}. 
See \cite{miller-letter} for details on how these relate to each other
and to the exponent complexes $\Delta_b(I)$.
\end{remark}

%%%%%%%%%%%%%%%%%%%%%%%%%%%%%%%%%%%%%%%%%
\section{A combinatorial  formula for rank }%%%%%%%%%%%%%%%%%%%%
%%%%%%%%%%%%%%%%%%%%%%%%%%%%%%%%%%%%%%%%%
\label{sec:primary complex}

For the remainder of this article, we work over any field $\kk$ of characteristic zero. 
The $\kk$-vector space dimension of
$\kk[\theta]/(\tilde{I}+ \< E-\beta \>)$
measures the deviation (with respect to $\beta \in \kk^d$)
of the ideal $I$ from being Cohen--Macaulay.
The goal of this section is to explicitly compute
this dimension, which equals $\rank(I+\<E-\beta\>)$
when $\kk =\CC$, 
by Lemma~\ref{lemma:all-holonomic}.

We first observe that the dimension we wish to compute is equal to the sum over
the exponents $b\in \VV(\tilde{I} + \< E - \beta \>)$ of
the $\kk$-vector space dimensions of
$\kk[\theta]/(I_{\Delta_b(I)}+\< E \>)$. 
This reduces the computation to the case that
$\tilde{I}$ is squarefree and $\beta = 0$.

\begin{proposition}
\label{thm:dim count general}
Let $I$ be a monomial ideal in $\kk[\del]$, not necessarily squarefree.
Then
\[
\dim_\kk \left( \frac{\kk[\theta]}{\tilde{I} + \< E - \beta \>} \right)
    = \sum_{b \in \VV(\tilde{I} +  \< E - \beta \>)} \dim_\kk
    \left( \frac{\kk[\theta]}{ I_{\Delta_b(I) } + \< E \>} \right).
\]
\end{proposition}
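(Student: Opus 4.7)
The plan is to show, via the Chinese Remainder Theorem, that
\[
\kk[\theta]/(\tilde{I}+\<E-\beta\>) \;\cong\; \prod_{b \in \VV_\beta} \kk[\theta]/(I_{\Delta_b(I)}+\<E\>),
\]
where $\VV_\beta := \VV(\tilde{I}+\<E-\beta\>)$, after which the claimed dimension formula follows by taking $\kk$-dimensions on both sides. The ring on the left is Artinian, because Convention~\ref{conv:A}(3) guarantees that $E-\beta$ is a linear system of parameters modulo $\tilde{I}$, so its maximal ideals are in bijection with the finite set $\VV_\beta$. Writing $\mm_b := \<\theta_1-b_1,\dots,\theta_n-b_n\>$, the CRT yields a decomposition into local factors
\[
\kk[\theta]/(\tilde{I}+\<E-\beta\>) \;\cong\; \prod_{b \in \VV_\beta} \kk[\theta]_{\mm_b}/(\tilde{I}+\<E-\beta\>)\kk[\theta]_{\mm_b},
\]
and it remains to identify the $b$-th local factor with $\kk[\theta]/(I_{\Delta_b(I)}+\<E\>)$.

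The heart of the proof is the local identification $\tilde{I}\kk[\theta]_{\mm_b} = J_b(\theta-b)\kk[\theta]_{\mm_b}$, where $J_b := I_{\Delta_b(I)}$. The containment $\tilde{I} \subseteq J_b(\theta-b)$ holds globally: each generator $[\theta]_u$ of $\tilde{I}$ must vanish on every maximal component $b+\kk^\sigma$ of $\VV(\tilde{I})$ through $b$, and so lies in each prime $\<\theta_i-b_i : i \notin \sigma\>$ with $\sigma$ a facet of $\Delta_b(I)$, hence in their intersection $J_b(\theta-b)$. For the reverse inclusion, the key factorization is
\[
[\theta]_u \;=\; \prod_i\prod_{j=0}^{u_i-1}(\theta_i-j) \;\equiv\; (\text{unit})\cdot\prod_{i \in \tau(u,b)}(\theta_i-b_i) \quad \text{in } \kk[\theta]_{\mm_b},
\]
where $\tau(u,b) := \{i : b_i \in \{0,\dots,u_i-1\}\}$. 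For each minimal non-face $\tau$ of $\Delta_b(I)$, the fact that $b+\kk^\tau \not\subseteq \VV(\tilde{I})$ produces some $\del^u \in I$ with $\tau(u,b) \subseteq \tau$, so that $\prod_{i\in\tau(u,b)}(\theta_i-b_i)$ divides the corresponding generator $\prod_{i\in\tau}(\theta_i-b_i)$ of $J_b(\theta-b)$, placing the latter in $\tilde{I}\kk[\theta]_{\mm_b}$.

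The final step is routine. Since $b \in \VV_\beta$ satisfies $Ab=\beta$, the translation $\theta \mapsto \theta+b$ sends $J_b(\theta-b)+\<E-\beta\>$ to $J_b+\<E\>$. Moreover, $\kk[\theta]/(J_b+\<E\>)$ is supported only at the origin: $\VV(J_b)$ is the union of coordinate subspaces $\kk^\sigma$ over facets $\sigma \in \Delta_b(I)$, each of cardinality at most $d$, on which Convention~\ref{conv:A}(3) forces $\VV(E)\cap\kk^\sigma=\{0\}$, as the columns of $A$ indexed by $\sigma$ are linearly independent. Hence this ring agrees with its localization at $\mm_0$, giving the desired identification of local factors and completing the proof upon summing over $b$. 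The main obstacle is the reverse local inclusion $J_b(\theta-b)\subseteq\tilde{I}\kk[\theta]_{\mm_b}$: one must combine the nonvanishing of some $[\theta]_u$ on $b+\kk^\tau$ with the minimality of the non-face $\tau$ to extract a generator of $\tilde{I}$ whose localized form is exactly the desired divisor.
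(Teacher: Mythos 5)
Your proof is correct and takes essentially the same route as the paper: decompose the Artinian ring $\kk[\theta]/(\tilde{I}+\<E-\beta\>)$ into its local factors at the exponents $b\in\VV(\tilde{I}+\<E-\beta\>)$ and identify each factor with $\kk[\theta]/(I_{\Delta_b(I)}+\<E\>)$ via the translation $\theta\mapsto\theta+b$. The paper simply asserts the decomposition $\tilde{I}+\<E-\beta\> = \bigcap_b \bigl(J_b(\theta-b)+\<E-\beta\>\bigr)$ and sums dimensions, whereas you supply the localization argument (the local equality $\tilde{I}\,\kk[\theta]_{\mm_b}=J_b(\theta-b)\,\kk[\theta]_{\mm_b}$ via the unit factorization of $[\theta]_u$) that justifies exactly that step.
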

\begin{proof}
Recall that for $b \in \VV( \tilde{I} + \< E - \beta \> )$,
$I_{\Delta_b(I)} = J_b \subseteq \kk[\theta]$
denotes the Stanley--Reisner ideal of $\Delta_b(I)$.
Since $\tilde{I} + \< E - \beta \> =
\bigcap_{b \in \VV(\tilde{I} +  \< E - \beta \>)} J_b(\theta - b) + \< E - \beta \>$,
the $\kk$-vector space dimension of
$\kk[\theta]/( \tilde{I} + \< E - \beta \> )$
is equal to the sum over all exponents of $I$ with respect to $\beta$
of the dimensions of
$\kk[\theta]/(J_b(\theta - b) + \< E - \beta \>)$.
Each of these clearly coincides with the dimension of
$\kk[\theta]/( J_b + \< E\>)$. 
\end{proof}

%%%%%%%%%%%%%%%%%%%%%%%%%%%%%%%%%%%%%%%%%
\subsection{The squarefree case}%%%%%%%%%%%%%%%%%%%%%%%%
%%%%%%%%%%%%%%%%%%%%%%%%%%%%%%%%%%%%%%%%%
\label{subsec:squarefree rank}

Henceforth,
$I = I_{\Delta}$ is the squarefree monomial ideal in $\kk[\theta]$ 
corresponding to a simplicial complex $\Delta$ on $\{1,\dots,n \}$.

\begin{notation}
Let $F^0$ denote the set of facets of $\Delta$, and for $p>0$, set 
\begin{align}
F^p = \{ s\subseteq F^0 \ \big| \ |s| = p+1 \}.
\end{align}
Each element $s \in F^p$ determines a face
\[
\sigma(s) = \bigcap_{\tau \in s} \tau \ \in \Delta.
\]
Note that $\sigma(F^0) = \bigcap_{s\in F^0} \sigma(s)$ is 
the face of $\Delta$ corresponding to the unique element of $F^{|F^0|-1}$.
For $S\subseteq F^p$, we denote by 
\begin{align}
\label{kappa}
\kappa(S) = 
\max \left(0,\ d - \bigg| \bigcup_{s\in S} \sigma(s) \bigg| \right),
\end{align}
%Notice that for $s\in F^p$, 
%$\kappa(s) = \codim_{\kk^d}(\kk A_{\sigma(s)})$, 
%by our choice of $A$ in Convention~\ref{conv:A}. 
%
(see also Notation~\ref{not:asigma}).
Finally, we give notation to describe the $(p+1)$-intersections 
of facets of $\Delta$ that have dimension less than $d-p$: 
\begin{align}
G^p = \{ s\in F^p \mid \kappa(s) \geq p+1 \}.
\end{align} 
\end{notation}

\begin{definition}
\label{def: circuit}
The collections $F^\bullet$ naturally 
correspond to the nonempty faces of a $(|F^0|-1)$-simplex $\Omega$; 
we may view a subset $S \subseteq G^p$ 
as a collection of $p$-faces of $\Omega$.
Given $t\in G^p\setminus S$, 
we abuse notation and write $S$ and $S\cup\{t\}$ 
for the subcomplexes of $\Omega$ 
whose maximal faces are given by these sets.
Suppose that there is a minimal generator of 
$H^p( S\cup\{t\}, S; \kk)$ of the form 
$\sum_{s\in S} v_s\cdot s + v_t\cdot t$, 
where all coefficients are nonzero.
In this case, we say that $S\cup\{t\}$ is a \emph{circuit for $t$}.
\end{definition}

\begin{notation}
\label{not: psi} 
Let $G^p = \{s_1, s_2, \dots, s_{|G^p|} \}$ be a fixed order on $G^p$. 
For $1\leq j< |G^p|$, set 
\begin{align*}
G^p(j) & = \{ s_1,\dots, s_j \},\\
L^p(j) & = \{ \tau\subseteq \{1,\dots,j+1\} \mid
	 \{s_i\}_{i\in \tau} \text{ is a circuit for $s_{j+1}$}, \  j+1 \in \tau \}, \\
\text{ and } \quad
L^p(j,k) & = \{ \Lambda \subseteq L^p(j) 
	\mid |\Lambda| = k \} \text{ for $1\leq k \leq |L^p(j)|$. } 
\end{align*}
For $\Lambda\in L^p(j,k)$, 
let $s_\Lambda = \bigcup_{\tau\in \Lambda} \bigcup_{i\in \tau} s_i$, 
and define 
\begin{align*}
\psi^p(j) = &
\sum_{k=1}^{|L^p(j)|}
	\sum_{\Lambda \in L^p(j,k)}  
	(-1)^{|\Lambda|+1}
	\binom{ \kappa( s_\Lambda) }{p+1} 
\end{align*}
Adding over all $j$, we obtain 
\[
\psi^p = \sum_{j = 1}^{|G^p| - 1} \psi^p(j).
\]
This notation is designed for the proof of 
Lemma~\ref{lemma:second dim}, where its motivation will become clear. 
\end{notation}

The derivation of the following formula will occupy the remainder of this section. 

\begin{theorem}
\label{thm:dim count squarefree}
If $I = I_\Delta$ is a squarefree monomial ideal in $\kk[\theta]$, then
\smallskip
\begin{align*}
\dim_\kk \left( \frac{\kk[\theta]}{ I + \< E\>} \right)
= 
\deg(I) 
	+ \binom{\kappa( \sigma(F^0) )-1}{|F^0|-1} 
	- \sum_{p=0}^{|F^0|-2} \sum_{s\in G^p} \binom{\kappa(s)-1}{p+1}
	+ \sum_{p=0}^{|F^0| - 2} \psi^p. 
\end{align*}
\end{theorem}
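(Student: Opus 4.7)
The plan is to exploit the Mayer--Vietoris Čech complex associated to the primary decomposition $I_\Delta = \bigcap_{\sigma \in F^0}\mathfrak{p}_\sigma$, combined with the Koszul complex on the Euler operators $E$. The coordinate-subspace cover of $\VV(I_\Delta)$ yields an exact augmented complex
\[
0 \to \kk[\theta]/I_\Delta \to C^0 \to C^1 \to \cdots \to C^{|F^0|-1} \to 0,
\qquad C^p = \bigoplus_{s \in F^p}\kk[\theta]/\mathfrak{p}_{\sigma(s)},
\]
and tensoring with the Koszul complex $K_\bullet(E;\kk[\theta])$ produces a bicomplex whose total complex computes $H_\bullet(E;\kk[\theta]/I_\Delta)$. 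Since $H_0(E;\kk[\theta]/I_\Delta) = \kk[\theta]/(I+\langle E\rangle)$, the desired dimension is read off as the $\kk$-dimension of the total degree-zero part.

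For each summand, Convention~\ref{conv:A}(3) implies that the restriction $E|_{\sigma(s)}$ splits as a regular sequence of length $|\sigma(s)|$ together with $\kappa(s) = d - |\sigma(s)|$ linearly dependent forms. A direct Koszul computation then yields
\[
\dim_\kk H_q\bigl(E\,;\,\kk[\theta]/\mathfrak{p}_{\sigma(s)}\bigr) = \binom{\kappa(s)}{q}.
\]
Consequently, the vertical-first spectral sequence has $E_1$-page $E_1^{p,q} = \bigoplus_{s \in F^p}\Lambda^q \kk^{\kappa(s)}$, while the complementary horizontal-first spectral sequence collapses at $E_2$ onto $H_\bullet(E;\kk[\theta]/I_\Delta)$ along the column $p=0$, giving the desired comparison.

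I would then extract the total degree-zero contributions page by page. The $q=0$ row coincides with the reduced Čech complex of the contractible simplex $\Omega$ on vertices $F^0$, so its $d_1$-cohomology vanishes in positive degree; the $\deg(I)$ summands instead emerge via cross-row cancellations on the diagonal, anchored by the top-dimensional facets. For $p \geq 1$, surviving contributions to position $(p,p)$ are supported on $G^p = \{s\in F^p : \kappa(s)\geq p+1\}$ and produce the binomial terms $\binom{\kappa(s)-1}{p+1}$ via Pascal-type identities on the exterior powers $\Lambda^\bullet \kk^{\kappa(s)}$. The higher differentials $d_r$ for $r\geq 2$ encode the cohomological relations among Koszul classes in $G^p$; these are precisely the circuits of Definition~\ref{def: circuit}, and the alternating sum $\psi^p$ of Notation~\ref{not: psi} tallies the inclusion-exclusion adjustments in the $E_\infty$-ranks arising from these circuits. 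The boundary term $\binom{\kappa(\sigma(F^0))-1}{|F^0|-1}$ accounts for the contribution of the top Čech cell indexed by $F^0$ itself, which is nontrivial precisely when the innermost intersection $\sigma(F^0)$ is strictly smaller than a top-dimensional face.

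The main obstacle is the analysis of the higher-page differentials. Translating the circuit combinatorics encoded in $L^p(j)$ and $L^p(j,k)$ into concrete linear relations among Koszul cohomology classes, and verifying that the alternating sum $\psi^p$ correctly counts all $E_\infty$ cancellations with the correct signs, demands careful dimension and sign bookkeeping. Once the dictionary between circuits and cohomological relations is firmly established, the stated formula will follow by summing the surviving ranks along the total degree-zero diagonal.
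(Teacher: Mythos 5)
Your setup coincides with the paper's: the augmented complex you write down is exactly the primary resolution $R^\bullet$ of (\ref{eqn:primary complex}), the bicomplex obtained by tensoring with the Koszul complex on $E$ is the same double complex $E_0^{p,-q}=K_q(R^p;E)$, and your computation $\dim_\kk H_q(E;\kk[\theta_{\sigma(s)}])=\binom{\kappa(s)}{q}$ is the content of Lemma~\ref{lemma:first dim}. The gap lies in everything after that. You propose to read the answer off the $E_\infty$-page of the naive vertical-first spectral sequence by analyzing its higher differentials $d_r$, $r\geq 2$, and you identify those higher differentials with the circuits of Definition~\ref{def: circuit}. That is not where the circuit combinatorics lives, and the naive sequence ${}_{v}E$ is precisely the object the paper avoids handling directly. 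The paper replaces ${}_{v}E$ by a modified spectral sequence ${'E}$ built from the quasi-isomorphism $K_\bullet(\kk[\theta_\sigma];E)\simeq K^\sigma_\bullet(\kk[\theta_\sigma];E)\otimes_\ZZ\bigwedge^\bullet(\ker_\ZZ A_\sigma)$ of Lemma~\ref{lemma:EKdecomp}, checks that the horizontal differentials are compatible with this decomposition and identifies their images (Lemma~\ref{lemma:EKimage}), and, crucially, proves that ${'E}$ degenerates at the second page (Lemma~\ref{lemma:ss terminates}). Once degeneration is known, all the combinatorial content sits in the rank of the \emph{first}-page differential $\delta_1^{p,-p-1}$, and it is this rank that is computed by inclusion--exclusion over circuits, giving $\dim_\kk(\image \delta_1^{p,-p-1})=-\psi^p+\sum_{s\in G^p}\binom{\kappa(s)}{p+1}$ (Lemma~\ref{lemma:second dim}); the stated formula then follows from (\ref{eqn:use 'E1}) by Pascal-identity bookkeeping, with $\deg(I)=|F^0\setminus G^0|$ emerging there rather than from a separate contractibility argument on the $q=0$ row.

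So the ``main obstacle'' you flag---translating the sets $L^p(j)$, $L^p(j,k)$ into relations among Koszul classes and verifying that $\psi^p$ counts $E_\infty$ cancellations with correct signs on higher pages---is not a detail to be filled in along your route; it is the substance of the theorem, and your proposed dictionary points at the wrong differentials. Without something playing the role of the decomposition in Lemma~\ref{lemma:EKdecomp} (which makes the differentials respect the exterior-algebra factors) and the degeneration statement, there is no reason the higher differentials of your bicomplex are governed by $G^p$ and its circuits, and your sketch supplies no mechanism for computing their ranks. As written, the proposal reproduces the paper's setup and its $E_1$-page computation but leaves the decisive steps---degeneration at page two and the circuit/inclusion--exclusion computation of $\dim_\kk(\image\delta_1^{p,-p-1})$---unproved.
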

\medskip

\begin{remark}
\label{rem:deg in formula}
The set of top-dimensional faces of $\Delta$ is 
$F^0\setminus G^0 = \{ s\in F^0 \mid |\sigma(s)| = d\}$ 
because $\dim(\Delta) = d-1$, 
and the degree of $I$ is
\[
\deg(I) = 
\sum_{s\in F^0\setminus G^0} \binom{\kappa(s)}{0} = |F^0\setminus G^0|.
\]
\end{remark}

\smallskip
\begin{example}[Example~\ref{ex:radCM}, continued]
\label{ex:radCM rank}
With $b=(0,0,0,0,1)$, 
consider the squarefree ideal 
$I_{\Delta_b(I)} \subseteq \kk[\theta_1,\dots,\theta_5]$.
Recall that the simplicial complex $\Delta_b(I)$ 
consists of two line segments,
$\{ 1,2 \}$ and $\{ 3,4 \}$,
so $F^0 = \{ s = \{ 1,2 \}, s' = \{ 3,4 \} \}$,
$F^1 = \{ \{ s,s'\} \}$,
and $F^p = 0$ for $p\neq 0,1$.
Also, 
$G^0 = \nothing$. 
By Theorem~\ref{thm:dim count squarefree},
\begin{align*}
\dim_\kk \left( \frac{\kk[\theta]}{ I_{\Delta_b(I)} + \< E\>} \right) =
& \ 
2 + \binom{1}{1} - 0 + 0
= 3.
\end{align*}
%while the degree of
%$I =  \< \del_1\del_4, \del_2\del_4, \del_2\del_3, \del_1\del_3\del_5, \del_5^2 \>$
%is 2.
\end{example}

\medskip

To prove Theorem~\ref{thm:dim count squarefree}, 
we begin by constructing an acyclic cochain complex, 
called the \emph{primary resolution} of $I = I_\Delta$ because its only 
homology module is isomorphic to $\kk[\theta]/I$. 
We will then consider the resulting Koszul spectral sequence with respect to $E$.

\begin{notation}
\label{not:asigma}
The submatrix of $A$ with columns corresponding to a face $\sigma \in \Delta$
is denoted $A_\sigma = (a_i)_{i\in\sigma}$, and
$\kk[\theta_\sigma] = \kk[\theta_i \mid i \in \sigma]$
is a polynomial ring in the corresponding variables.
We view $\kk[\theta_\sigma]$ as a $\kk[\theta]$-module
via the natural surjection
$\kk[\theta] \twoheadrightarrow
\kk[\theta]/\< \theta_i \mid i \notin \sigma \> = \kk[\theta_\sigma]$.
%
%Notice that by our assumptions on $A$ in Convention~\ref{conv:A},
%$\dim \kk[\theta_\sigma] = | \sigma | = \dim_\kk ( \kk A_\sigma )$.
%
%Further, for $s\in F^p$, 
%$\kappa(s) = \codim_{\kk^d}(\kk A_{\sigma(s)})$. 
%
%
For $s\in F^p$, 
$\dim \kk[\theta_{\sigma(s)}] = | \sigma(s) | = \dim_\kk ( \kk A_{\sigma(s)} )$ 
%Notice that 
by our assumptions on $A$ in Convention~\ref{conv:A}. 
Thus $\kappa(s)$ from Notation~\ref{kappa} is equal to 
$\codim_{\kk^d}(\kk A_{\sigma(s)})$. 
\end{notation}

By choice of a suitable incidence function on the lattice $F^\bullet$,
there is an exact sequence of $\kk[\theta]$-modules: 
\begin{eqnarray}
\label{eqn:primary complex}
0 \rightarrow \kk[\theta]/I \rightarrow R^\bullet,
\end{eqnarray}
where
\begin{eqnarray}
R^p = \bigoplus_{s \in F^p} \kk[\theta_{\sigma(s)}].
\end{eqnarray}
We call $R^\bullet$ the \emph{primary resolution} of $I$. 
We may view $R^\bullet$ as a 
cellular resolution supported on an $(|F^0| - 1)$-simplex 
(see \cite[Definition~6.7]{rank jumps}). 

\begin{example}[Example~\ref{ex:radCM rank}, continued]
In this case, the primary resolution of $I$ will be
\[
\kk[\theta_1,\theta_2] \oplus \kk[\theta_3,\theta_4]
    \stackrel{\delta}{\longrightarrow} \kk \longrightarrow 0
\]
with differential $\delta(f,g) = \ol{f}-\ol{g}$,
where $\ol{ \phantom{} \cdot \phantom{} }$
denotes image modulo $ \<\theta\>$.
\end{example}

\begin{remark}
The primary resolution $R^\bullet$ of (\ref{eqn:primary complex})
is an example of an \emph{irreducible resolution} of $\kk[\theta]/I$
in \cite[Definition~2.1]{cm quotients}.
\cite[Theorem~4.2]{cm quotients}
shows that $I$ is Cohen--Macaulay
precisely when $R^\bullet$ contains an
irreducible resolution $S^\bullet$ of $\kk[\theta]/I$
such that $S^p$ is of pure Krull dimension
$\dim(I) - p$ for all $p\geq 0$.
If this is the case, then by Corollary~\ref{cor:rank CM}, 
the $\kk$-vector space dimension of $\kk[\theta]/( I + \< E\>)$
is equal to the degree of $I$.
However, our current computation of $\dim_\kk(\kk[\theta]/( I + \< E\>))$
is independent of this remark.
\end{remark}

\begin{remark}
While the primary resolution $R^\bullet$ of $I$ may appear to be similar
to the complex in \cite[Theorem~5.7.3]{bruns-herzog},
they are generally quite different.
For each $\sigma$, this other complex places
$\kk[\theta_\sigma]$ in the
$|\sigma|$-th homological degree.
However, $R^\bullet$ permits summands
of rings of varying Krull dimension within a
single cohomological degree
and $\kk[\theta_\sigma]$ never appears in
the primary resolution if $\sigma$ is not contained
in the intersection of a collection of facets of $\Delta$.
In fact, the primary resolution
will coincide with (a shifted copy of)
the complex in \cite[Theorem~5.7.3]{bruns-herzog}
only when $\Delta$ is the boundary of a simplex.
\end{remark}

Consider the double complex $E_0^{\bullet,\bullet}$ given by
$E_0^{p,-q} = K_q( R^p; E)$,
where $K_\bullet(-;E)$ forms a Koszul complex 
with respect to the sequence $E$.
Taking homology first with respect to the horizontal differential, we see that
\begin{eqnarray*}
{}_h E_\infty^{p,-q} = {}_h E_1^{p,-q} =
\begin{cases}
    K_q( \kk[\theta]/I; E) &  \text{if $p=0\text{ and }0\leq q \leq d,$}\\
    0 & \text{otherwise.}
\end{cases}
\end{eqnarray*}
Let ${}_{v}E_\bullet^{\bullet,\bullet}$ denote
the spectral sequence obtained from $E_0^{\bullet,\bullet}$
by first taking homology with respect to
the vertical differential.
Since $H_0( \kk[\theta]/I; E ) = \kk[\theta]/(I + \< E \>)$ and
${}_{v}E_\bullet^{\bullet,\bullet}$
converges to the same abutment as ${}_{h}E_\bullet^{\bullet,\bullet}$,
\begin{eqnarray}
\label{eqn:to IIE}
\dim_\kk \left( \frac{ \kk[\theta] }{ I + \< E \> } \right)
= \sum_{p-q=0} \dim_\kk ({}_{v}E_\infty^{p,-q}).
\end{eqnarray}
Notice that 
\begin{eqnarray*}
{}_{v}E_1^{p,-q} =
\begin{cases}
    H_q( R^p; E ) &  \text{if $0\leq q \leq d,$}\\
    0 & \text{otherwise,}
\end{cases}
\end{eqnarray*}
and for $0\leq q\leq d$,
\begin{eqnarray*}
H_q( R^p; E ) = \bigoplus_{s \in F^p} H_q( \kk[\theta_{\sigma(s)}]; E ).
\end{eqnarray*}

Instead of ${}_{v}E_\bullet^{\bullet,\bullet}$,
we will study a sequence ${'E}_\bullet^{\bullet,\bullet}$ with
the same abutment and differentials which behave well
with respect to vanishing of Koszul homology.
To introduce this sequence, we first need some notation.

\begin{notation}
For $\sigma \in \Delta$,
let $L(\sigma)$ be the lexicographically first subset of
$\{1,2,\dots,d\}$ of cardinality equal to $\dim_\kk (\kk A_\sigma)$
such that $\{ E_i \}_{i\in L(\sigma)}$
is a system of parameters for $\kk[\theta_\sigma]$ (as a $\kk[\theta]$-module).
For a $\ZZ$-graded $\kk[\theta_\sigma]$-module $M$,
let $K^\sigma_\bullet( M; E )$ denote the Koszul complex on $M$
given by the operators $\{E_i \}_{i\in L(\sigma)}$ and
$H^\sigma_q( M; E ) = H_i( K^\sigma_\bullet( M; E ) )$.
\end{notation}

\begin{notation}
For $\sigma \in \Delta$, let
\[
\mbox{
$\ker_\ZZ A_\sigma = 
   \{ v \in \ZZ^d \mid \sum_{i=1}^d v_i a_{ij} = 0 \ \forall j \in \sigma \},
$}
\]
using the standard basis of $\ZZ A = \ZZ^d$,
and let $\bigwedge^\bullet (\ker_\ZZ A_\sigma)$
denote a complex with trivial differentials.
These complexes will be useful in our
computation of $\dim_\kk (\kk[\theta]/( I + \< E \> ))$.
\end{notation}

\begin{lemma}
\label{lemma:EKdecomp}
Let $\sigma \in \Delta$ and $M$ be an $A$-graded $\kk[\theta_\sigma]$-module.
There is a quasi-isomorphism of complexes
\begin{align}
\label{eqn:EK qis}
\mbox{
	$K_\bullet( M; E) \simeq_{\text{qis}}
	K^\sigma_\bullet( M; E )
	\otimes_\ZZ  \bigwedge^\bullet \left( \ker_\ZZ A_\sigma \right).$
}
\end{align}
In particular, there is a decomposition
$
H_\bullet(\kk[\theta_\sigma]; E ) \cong
H^\sigma_0(\kk[\theta_\sigma]; E )
\otimes_\ZZ \bigwedge^\bullet \left(\ker_\ZZ A_\sigma \right).
$

\end{lemma}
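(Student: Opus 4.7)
The plan is to prove the lemma by an explicit change of basis among the Euler operators $E_1, \ldots, E_d$ that decouples the subsystem of parameters $\{E_i\}_{i \in L(\sigma)}$ from the remaining operators, all of which should then act trivially on the $\kk[\theta_\sigma]$-module $M$. The starting observation is that, by the defining property of $L(\sigma)$, the rows of the submatrix $A_\sigma$ indexed by $L(\sigma)$ are linearly independent over $\QQ$ and span the full row space of $A_\sigma$, whose rank equals $|\sigma|$ by Convention~\ref{conv:A}(3). Consequently there is a direct sum decomposition
\[
\QQ^d = \Span\{e_i \mid i \in L(\sigma)\} \oplus \ker_\QQ A_\sigma,
\]
and any $\ZZ$-basis $v_1, \dots, v_{d - |\sigma|}$ of $\ker_\ZZ A_\sigma$ extends $\{e_i\}_{i \in L(\sigma)}$ to a $\QQ$-basis of $\QQ^d$.

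Next I will use the crucial identity that for every $v \in \ker_\ZZ A_\sigma$,
\[
\sum_{i=1}^d v_i E_i = \sum_{j \in \sigma}\Bigl(\sum_{i=1}^d v_i a_{ij}\Bigr)\theta_j = 0
\]
holds on $\kk[\theta_\sigma]$, hence on $M$. Let $P \in GL_d(\QQ)$ be the invertible matrix whose rows are the vectors $e_i^T$ for $i \in L(\sigma)$ together with $v_1^T, \dots, v_{d - |\sigma|}^T$. Writing $K_\bullet(M; E) = M \otimes_\kk \bigwedge^\bullet \kk^d$, I will apply $P$ as a $\QQ$-linear change of basis on the factor $\kk^d$. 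Since Koszul complexes of commuting operators are natural with respect to invertible linear changes of the generating vector space, this replaces the operator sequence $(E_1, \ldots, E_d)$ by $(\{E_i\}_{i \in L(\sigma)}, 0, \dots, 0)$ acting on $M$, and the resulting Koszul complex splits as a tensor product:
\[
K_\bullet(M; E) \;\cong\; K^\sigma_\bullet(M; E) \otimes_\kk \bigwedge\nolimits^{\bullet} \ker_\kk A_\sigma.
\]
Using $\ker_\kk A_\sigma = \ker_\ZZ A_\sigma \otimes_\ZZ \kk$, this rewrites as the desired quasi-isomorphism~(\ref{eqn:EK qis}), in fact as an honest isomorphism of complexes.

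For the second assertion, I will specialize to $M = \kk[\theta_\sigma]$. Because $\kk[\theta_\sigma]$ is a polynomial ring of Krull dimension $|\sigma|$ and is therefore Cohen--Macaulay, the linear system of parameters $\{E_i\}_{i \in L(\sigma)}$ is automatically a regular sequence on it, so $H^\sigma_q(\kk[\theta_\sigma]; E) = 0$ for all $q > 0$. Applying the K\"unneth theorem to the tensor factorization above --- with trivial differential on the second factor --- then yields the asserted decomposition $H_\bullet(\kk[\theta_\sigma]; E) \cong H^\sigma_0(\kk[\theta_\sigma]; E) \otimes_\ZZ \bigwedge^\bullet \ker_\ZZ A_\sigma$. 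The only mild subtlety is to carry out the change of basis over $\QQ$ using an integral basis of $\ker_\ZZ A_\sigma$ chosen at the outset, so that the integral lattice (rather than merely $\ker_\kk A_\sigma$) appears in the final statement; the passage between $\QQ$- and $\kk$-coefficients is otherwise transparent in characteristic zero.
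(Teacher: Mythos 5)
Your proof is correct, but it takes a different route from the paper: the paper disposes of the quasi-isomorphism~(\ref{eqn:EK qis}) in one line, by declaring it a commutative version of \cite[Proposition~3.3]{rank jumps} (a statement proved there in the Weyl-algebra setting for hypergeometric systems), and then gets the homology decomposition from \cite[Corollary~1.6.14]{bruns-herzog}. You instead give a self-contained linear-algebra argument: since every $v\in\ker_\ZZ A_\sigma$ makes $\sum_i v_iE_i$ act by zero on any $\kk[\theta_\sigma]$-module (the variables $\theta_j$, $j\notin\sigma$, annihilate $M$ through the surjection $\kk[\theta]\onto\kk[\theta_\sigma]$), the invertible change of basis sending $E$ to $(\{E_i\}_{i\in L(\sigma)},0,\dots,0)$ splits the Koszul complex as a tensor product. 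The key facts you rely on are all verified or verifiable: the s.o.p.\ property of $\{E_i\}_{i\in L(\sigma)}$ forces the corresponding rows of $A_\sigma$ to be linearly independent, giving $\QQ^d=\Span\{e_i\mid i\in L(\sigma)\}\oplus\ker_\QQ A_\sigma$; Koszul complexes of commuting operators are functorial under invertible linear substitutions of the sequence; and a polynomial ring is Cohen--Macaulay, so the homogeneous s.o.p.\ is regular and the higher $H^\sigma_q$ vanish. What your approach buys is an honest ($A$-graded, degree-preserving, since each $E_i$ has $A$-degree zero) isomorphism of complexes rather than merely a quasi-isomorphism, with no appeal to the $D$-module literature; this also makes the compatibility statement of Lemma~\ref{lemma:EKimage} more transparent, since one sees explicitly which exterior factor comes from which kernel lattice. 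The only phrasing to tighten is your displayed identity $\sum_i v_iE_i=\sum_{j\in\sigma}\bigl(\sum_i v_ia_{ij}\bigr)\theta_j$: in $\kk[\theta]$ the left side also contains the terms $\sum_{j\notin\sigma}\bigl(\sum_i v_ia_{ij}\bigr)\theta_j$, so the equality is one of operators on $\kk[\theta_\sigma]$-modules (as you indicate), not of polynomials.
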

\begin{proof}
The quasi-isomorphism (\ref{eqn:EK qis}) is a commutative version
of Proposition~3.3 in \cite{rank jumps}.
Since $\dim_\kk( \kk A_\sigma ) = | \sigma | = \dim \kk[\theta_\sigma]$,
the choice of $L(\sigma)$ implies that
$\{E_i\}_{i\in L(\sigma)}$ is a maximal $\kk[\theta_\sigma]$-regular sequence.
Thus the second statement follows from \cite[Corollary~1.6.14]{bruns-herzog}.
\end{proof}

\begin{lemma}
\label{lemma:EKimage}
Let $\sigma \subseteq \tau$ be faces of $\Delta$, and let
$\pi: \kk[\theta_\tau] \twoheadrightarrow \kk[\theta_\sigma]$
be the natural surjection.
There is a commutative diagram
\begin{eqnarray}
\xymatrix{
K_\bullet(M; E ) \ar[r]^{K_\bullet(\pi; E )} \ar[d] &
K_\bullet(N; E ) \ar[d] \\
K^{\tau}_\bullet(M; E ) \otimes_\ZZ
     \bigwedge^\bullet \left( \ker_\ZZ A_\tau \right) \ar[r] &
K^{\sigma}_\bullet(N; E )
    \otimes_\ZZ \bigwedge^\bullet \left( \ker_\ZZ A_\sigma \right)
}
\end{eqnarray}
with vertical maps given by (\ref{eqn:EK qis}). 
Further, the image of $H_\bullet(\pi; E)$ is isomorphic to
\[
\mbox{
$H^\sigma_0(\kk[\theta_\sigma]; E )
    \otimes_\ZZ \bigwedge^\bullet \left( \ker_\ZZ A_\tau \right)$ }
\]
as a submodule of
$H^{\sigma}_0(N; E ) \otimes_\ZZ
    \bigwedge^\bullet \left( \ker_\ZZ A_\sigma \right)$.
\end{lemma}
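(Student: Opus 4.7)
The plan is to prove both statements by upgrading the quasi-isomorphism of Lemma~\ref{lemma:EKdecomp} to a natural transformation. First I would revisit the construction of (\ref{eqn:EK qis}) given in Proposition~3.3 of \cite{rank jumps}: its quasi-isomorphism is obtained by a change of generators on the Koszul complex. After restricting to $\kk[\theta_\sigma]$, each $E_i$ with $i \notin L(\sigma)$ becomes a $\kk$-linear combination of $\{E_j\}_{j\in L(\sigma)}$, with coefficient vectors in $\ker_\ZZ A_\sigma$. This splits $K_\bullet(M;E)$ as the tensor product of the Koszul complex on the regular subsequence $\{E_i\}_{i \in L(\sigma)}$ with the trivially-differential exterior complex on those relations, and since it depends only on $A_\sigma$ (not on the module), the construction is functorial in $\kk[\theta_\sigma]$-module maps; the analogous statement holds for $\tau$.

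To handle the change of rings induced by $\pi$, I would check compatibility of the two splittings: the relations expressing each $E_i$ (for $i\notin L(\tau)$) in terms of $\{E_j\}_{j\in L(\tau)}$ on $\kk[\theta_\tau]$ remain valid after applying $\pi$, so their coefficient vectors (which lie in $\ker_\ZZ A_\tau$) inject naturally into $\ker_\ZZ A_\sigma$ via the containment $\ker_\ZZ A_\tau \subseteq \ker_\ZZ A_\sigma$ (which holds because $\sigma \subseteq \tau$ imposes additional vanishing conditions). This yields commutativity of the diagram. I would then use commutativity to transfer the image computation to the bottom row, where the map splits as a tensor product. On the Koszul factor, $\pi$ induces a surjection $H^\tau_0(\kk[\theta_\tau];E) \twoheadrightarrow H^\sigma_0(\kk[\theta_\sigma];E)$, since both are quotients of polynomial rings by Euler-operator ideals and $\pi$ is surjective. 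On the exterior factor, the map is the inclusion $\bigwedge^\bullet(\ker_\ZZ A_\tau) \hookrightarrow \bigwedge^\bullet(\ker_\ZZ A_\sigma)$. Combining these two factors produces exactly $H^\sigma_0(\kk[\theta_\sigma];E) \otimes_\ZZ \bigwedge^\bullet(\ker_\ZZ A_\tau)$ as the image.

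The main obstacle is the potential non-nesting of $L(\sigma)$ and $L(\tau)$: their lex-minimal definitions do not force $L(\sigma) \subseteq L(\tau)$. I would circumvent this by observing that the decomposition (\ref{eqn:EK qis}) is canonical on homology regardless of which maximal regular subsequence is chosen, so one can select compatible subsequences for $\sigma$ and $\tau$ at the chain level to make the square commute on the nose. With this flexibility in place, the rest of the verification is a direct computation tracking basis vectors through the tensor decompositions, guided by the inclusion $\ker_\ZZ A_\tau \subseteq \ker_\ZZ A_\sigma$.
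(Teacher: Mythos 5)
Your argument is correct and follows essentially the same route as the paper, whose proof is simply a citation to Lemma~3.7 and Proposition~3.8 of \cite{rank jumps}: the content of those results is exactly the naturality of the splitting (\ref{eqn:EK qis}) with respect to $\pi$ and the containment $\ker_\ZZ A_\tau \subseteq \ker_\ZZ A_\sigma$ that you exploit. Your observation that the identification on homology is canonical (the classes in degree $q$ are represented by cycles in $\bigwedge^q(\ker_\ZZ A_\sigma) \otimes M$, so the lex-minimal choices $L(\sigma)$ and $L(\tau)$ need not be nested) correctly disposes of the only delicate point.
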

\begin{proof}
This is a modified version of
Lemma~3.7 and Proposition~3.8 in \cite{rank jumps}.
\end{proof}

\begin{notation}
\label{not:delta}
Let $({'E}_\bullet^{\bullet,\bullet},\delta_\bullet^{\bullet,\bullet})$ 
be the spectral sequence determined by 
letting $\delta_0^{\bullet,\bullet}$ be the vertical differential of 
the double complex
${'E}_0^{\bullet,\bullet}$ with
\begin{align*}
\mbox{
${'E}_0^{p,-q} =
    \bigoplus_{ s \in F^p } \bigoplus_{ i + j = q }
    K^\sigma_i( \kk[\theta_{\sigma(s)}]; E )
    \otimes_\ZZ \bigwedge^j \left( \ker_\ZZ A_{\sigma(s)} \right). 
$} \end{align*}
Note that by Lemma~\ref{lemma:EKdecomp},
\begin{align}
\label{eqn:ss}
\mbox{
${'E}_1^{p,-q} =
    \bigoplus_{ s \in F^p } H^\sigma_0( \kk[\theta_{\sigma(s)}]; E )
    \otimes_\ZZ \bigwedge^q \left( \ker_\ZZ A_{\sigma(s)} \right).
$}
\end{align}
\end{notation}

\smallskip
By Lemmas~\ref{lemma:EKdecomp}~and~\ref{lemma:EKimage},
the horizontal differentials of $E_0^{\bullet,\bullet}$
are compatible with the quasi-isomorphism
\begin{eqnarray*}
\mbox{
$E_0^{p,\bullet} \simeq_{qis}
    \bigoplus_{s\in F^p} K^\sigma_\bullet( \kk[\theta_{\sigma(s)}]; E )
    \otimes_\ZZ \bigwedge^\bullet \left( \ker_\ZZ A_{\sigma(s)} \right).
$}\end{eqnarray*}
Thus we may replace ${}_{v}E_\infty^{p,-q}$ in (\ref{eqn:to IIE})
with  ${'E}_\infty^{p,-q}$, obtaining 
\begin{eqnarray}
\label{eqn:use 'E}
\dim_\kk \left( \frac{\kk[\theta]}{ I + \< E \> } \right)
= \sum_{p-q=0} \dim_\kk ( {'E}_\infty^{p,-q}).
\end{eqnarray}
This replacement is beneficial because
${'E}_\bullet^{\bullet,\bullet}$ degenerates quickly.

\begin{lemma}
\label{lemma:ss terminates}
The spectral sequence ${'E}_\bullet^{\bullet,\bullet}$ of
(\ref{eqn:ss}) degenerates at the second page:
$'E_2^{\bullet,\bullet}= {'E}_\infty^{\bullet,\bullet}.$
\end{lemma}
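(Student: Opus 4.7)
The plan is to exhibit a decomposition of ${}'E_0^{\bullet,\bullet}$ that forces every higher differential to vanish for bidegree reasons. First I will observe that, by Lemma~\ref{lemma:EKdecomp}, each column of ${}'E_0^{\bullet,\bullet}$ has the tensor-product form $K^{\sigma(s)}_\bullet(\kk[\theta_{\sigma(s)}]; E)\otimes_\ZZ\bigwedge^\bullet(\ker_\ZZ A_{\sigma(s)})$, in which the vertical (Koszul) differential acts only on the first factor and thus preserves the exterior power degree $j$. Using the commutative diagram of Lemma~\ref{lemma:EKimage}, the horizontal differential induced by the primary resolution $R^\bullet$ is, after transport through the quasi-isomorphism, compatible with this tensor structure: for $s\subseteq t$ in $F^\bullet$ we have $\sigma(t)\subseteq\sigma(s)$, and the corresponding inclusion $\ker_\ZZ A_{\sigma(s)}\subseteq\ker_\ZZ A_{\sigma(t)}$ induces an inclusion of exterior powers in each fixed degree, so the horizontal differential also preserves $j$.

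Consequently, ${}'E_0^{\bullet,\bullet}$ splits as a direct sum over $j$ of sub-double-complexes ${}'E_0^{\bullet,\bullet}(j)$ indexed by $(p,i)$, and the spectral sequence ${}'E_\bullet^{\bullet,\bullet}$ splits accordingly. For each fixed $j$, the vertical homology of the $p$-th column collapses to a single row: by the choice of $L(\sigma(s))$, the sequence $\{E_i\}_{i\in L(\sigma(s))}$ is a maximal $\kk[\theta_{\sigma(s)}]$-regular sequence, so $H^{\sigma(s)}_i(\kk[\theta_{\sigma(s)}]; E)=0$ for $i>0$, and therefore ${}'E_1^{p,-q}(j)$ is nonzero only at $q=j$.

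The differential $d_r$ has bidegree $(r,-r+1)$ in the indices $(p,-q)$, so for $r\geq 2$ it sends ${}'E_r^{p,-j}(j)$ into the row $q=j+r-1\neq j$ of the $j$-summand, where everything is already zero. Hence $d_r=0$ for every $r\geq 2$ on each $j$-summand, and summing over $j$ yields ${}'E_2^{\bullet,\bullet}={}'E_\infty^{\bullet,\bullet}$.

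The main technical point is verifying, at the chain level rather than on homology, that the horizontal differential of the primary resolution genuinely preserves the exterior power degree $j$ under the quasi-isomorphism of Lemma~\ref{lemma:EKdecomp}. This is the chain-level refinement of Lemma~\ref{lemma:EKimage}, and the argument reduces to unwinding its commutative diagram to confirm that the lower horizontal map is a tensor product of a map on the $K^\sigma_\bullet$ factor with the natural inclusion on the exterior factor.
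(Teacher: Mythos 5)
Your argument stands or falls with the claim that, at the chain level, the horizontal differential of ${'E}_0^{\bullet,\bullet}$ preserves the exterior degree $j$, so that the double complex splits as a direct sum of sub-double-complexes indexed by $j$. That is precisely the point you defer to ``unwinding the commutative diagram,'' and it is not supplied by the results you cite: Lemma~\ref{lemma:EKdecomp} gives only a quasi-isomorphism, and Lemma~\ref{lemma:EKimage} is a statement about the map induced on homology (it identifies the image of $H_\bullet(\pi;E)$), not about the chain-level shape of the transported horizontal differential. In fact the natural chain-level comparison does not preserve $j$. For $s\subset t$ one has $\sigma(t)\subseteq\sigma(s)$ and $\ker_\ZZ A_{\sigma(s)}\subseteq\ker_\ZZ A_{\sigma(t)}$, as you say, but the complements used to build $K^{\sigma(s)}_\bullet$ and $K^{\sigma(t)}_\bullet$ (the spans of $\{E_i\}_{i\in L(\sigma(s))}$ and $\{E_i\}_{i\in L(\sigma(t))}$, which need not even be nested, and likewise any other choice of complements) cannot be chosen compatibly for all faces: a vector in the complement chosen for $\sigma(s)$ generally acquires a nonzero component in $\ker A_{\sigma(t)}$ when re-expanded at $\sigma(t)$. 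Consequently the transported horizontal map only satisfies $j(\text{target})\geq j(\text{source})$; one obtains a filtration of the double complex by $j$, not a grading, and your direct-sum decomposition of the spectral sequence is not available.

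This filtration points in exactly the wrong direction for your bidegree argument. Writing $\delta_h$, $\delta_v$ for the horizontal and vertical differentials and tracing the zig-zag that computes $d_2$ on a class at $(p,-q)$: a representative $z$ may be taken in the component $(i,j)=(0,q)$; since $\delta_v$ preserves $j$ and the higher $K^\sigma$-homology vanishes, the lift $w$ at $(p+1,-q-1)$ may be taken in $(i,j)=(1,q)$; and then $d_2[z]$ is represented by the component of $\delta_h w$ lying in $(i,j)=(0,q+1)$ --- that is, by precisely the $j$-raising part of the horizontal differential that your splitting assumes away (the $j$-preserving part of $\delta_h w$ sits in Koszul degree $i=1$ and dies in vertical homology). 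So the vanishing of $d_r$ for $r\geq 2$ is not a formal consequence of the row-concentration of each would-be $j$-summand; it is the actual content of the lemma. The paper does not attempt a splitting argument either: its proof is to invoke the argument of \cite[Lemma~5.26]{rank jumps}. To repair your proof you would need either to genuinely establish the chain-level compatibility you assume (which, as stated, appears to be false in general) or to control the effect of the $j$-raising components on the $E_2$-page directly, for instance using the description of the images in Lemma~\ref{lemma:EKimage}, which is the route the cited argument takes.
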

\begin{proof}
The proof follows the same argument as \cite[Lemma~5.26]{rank jumps}.
\end{proof}

Lemma~\ref{lemma:ss terminates} and (\ref{eqn:use 'E}) imply that
\begin{equation}
\label{eqn:use 'E1}
\dim_\kk \left( \frac{ \kk[\theta]}{I + \< E \>} \right)
= \sum_{ p - q \geq 0 } (-1)^{ p - q } \dim_\kk ({'E}_1^{p,-q})
    - \sum_{ p - q = - 1 } \dim_\kk (\image \delta_1^{p,-q}), 
\end{equation}
and recall from Notation~\ref{not:delta} that $\delta_\bullet^{\bullet,\bullet}$ 
denotes the differential of
${'E}_\bullet^{\bullet,\bullet}$. 
We compute the dimensions on the right hand side of (\ref{eqn:use 'E1})
in the following lemmas.

\begin{lemma}
\label{lemma:first dim}
For $0 \leq q \leq d$ and $p \geq 0$,
\[
\dim_\kk ({'E}_1^{p,-q}) = \sum_{s \in F^p} \binom{ \kappa(s) }{q}.
\]
\end{lemma}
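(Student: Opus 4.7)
The plan is to read off the dimension of each summand of ${'E}_1^{p,-q}$ directly from formula~(\ref{eqn:ss}), so the task reduces to computing the $\kk$-dimension of $H^\sigma_0(\kk[\theta_{\sigma(s)}];E)$ and the rank of $\ker_\ZZ A_{\sigma(s)}$ for each $s \in F^p$, and then multiplying.

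First I would argue that $\dim_\kk H^\sigma_0(\kk[\theta_{\sigma(s)}];E) = 1$. Write $\sigma = \sigma(s)$. Since $\sigma \in \Delta$ and $\dim \Delta = d-1$, we have $|\sigma| \leq d$, so Convention~\ref{conv:A}(3) gives $\dim_\kk (\kk A_\sigma) = |\sigma| = \dim \kk[\theta_\sigma]$. By the choice of $L(\sigma)$, the collection $\{E_i\}_{i\in L(\sigma)}$ is a system of parameters for $\kk[\theta_\sigma]$ consisting of exactly $|\sigma|$ linear forms in $|\sigma|$ variables; such linear forms must be linearly independent, so they generate the maximal homogeneous ideal of $\kk[\theta_\sigma]$. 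Thus $H^\sigma_0(\kk[\theta_\sigma];E) = \kk[\theta_\sigma]/\langle E_i \mid i \in L(\sigma)\rangle \cong \kk$.

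Next I would compute the rank of $\ker_\ZZ A_\sigma$. Since $\ker_\ZZ A_\sigma$ is the left kernel of the integer matrix $A_\sigma$, whose $\QQ$-rank is $|\sigma|$ by the same instance of Convention~\ref{conv:A}(3), its rank equals $d - |\sigma|$. Because $|\sigma| \leq d$, this equals $\max(0,d-|\sigma|) = \kappa(\{s\})$, which we denote $\kappa(s)$. Consequently $\bigwedge^q(\ker_\ZZ A_\sigma)$ is a free abelian group of rank $\binom{\kappa(s)}{q}$, so after tensoring with the one-dimensional $\kk$-vector space $H^\sigma_0$ we obtain
\[
\dim_\kk\!\left(H^\sigma_0(\kk[\theta_{\sigma(s)}];E) \otimes_\ZZ \bigwedge^{\!q}\!\left(\ker_\ZZ A_{\sigma(s)}\right)\right) = \binom{\kappa(s)}{q}.
\]
Summing over $s \in F^p$ as in (\ref{eqn:ss}) then yields the claimed formula.

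There is no real obstacle here beyond bookkeeping: the only subtle point is noting that faces of $\Delta$ have cardinality at most $d$, which is what allows Convention~\ref{conv:A}(3) to give equality $\dim_\kk(\kk A_{\sigma(s)}) = |\sigma(s)|$ and hence ensures both that $H^\sigma_0$ collapses to $\kk$ and that the rank of the kernel is $\kappa(s)$ rather than something smaller. Once this observation is in hand, every other step is a direct dimension count.
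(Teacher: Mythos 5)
Your proposal is correct and follows essentially the same route as the paper: read the dimension of each summand off the decomposition in (\ref{eqn:ss}), note that $H^\sigma_0(\kk[\theta_{\sigma(s)}];E)\cong\kk$ because $\{E_i\}_{i\in L(\sigma(s))}$ is a linear system of parameters, and identify the rank of $\bigwedge^q(\ker_\ZZ A_{\sigma(s)})$ with $\binom{\kappa(s)}{q}$ via Convention~\ref{conv:A}(3), which gives $\kappa(s)=\codim_{\kk^d}(\kk A_{\sigma(s)})$. Your added detail (linear independence of the restricted Euler operators and the bound $|\sigma(s)|\le d$) simply makes explicit what the paper leaves implicit.
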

\begin{proof}
By Lemma~\ref{lemma:EKdecomp},
\[
\mbox{
${'E}_1^{p,-q} =
    \bigoplus_{s \in F^p} H^\sigma_0( \kk[\theta_{\sigma(s)}]; E )
    \otimes_\ZZ  \bigwedge^q \left( \ker_\ZZ A_{\sigma(s)} \right).$
 }
\]
Since $\{ E_i \}_{i\in L_{\sigma(s)}}$
is a system of parameters for $\kk[\theta_{\sigma(s)}]$ for all $s \in F^p$,
the $\kk$-vector space dimension of the module 
$H^\sigma_0( \kk[\theta_{\sigma(s)}]; E )$ is $1$.
Hence
\[
\dim_\kk  ({'E}_1^{p,-q}) =
    \sum_{s \in F^p} \binom{ \codim_{\kk^d} ( \kk A_{\sigma(s)} ) }{q},
\]
and by the choice of $A$ in Convention~\ref{conv:A},
$\kappa(s) = \codim_{\kk^d} ( \kk A_{\sigma(s)} )$.
\end{proof}

\begin{lemma}
\label{lemma:second dim}
For $p \leq |F^0|-2$,
\begin{eqnarray}
\dim_\kk (\image \delta_1^{p,-p-1})
=  - \psi^p + \sum_{s\in G^p} \binom{ \kappa(s) }{p+1}. 
\end{eqnarray}
\end{lemma}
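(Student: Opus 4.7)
The plan is to apply rank-nullity to $\delta_1^{p,-p-1}$ and then compute the dimension of the kernel. By Lemma~\ref{lemma:first dim},
\[
\dim_\kk {'E}_1^{p,-p-1} = \sum_{s\in F^p}\binom{\kappa(s)}{p+1} = \sum_{s\in G^p}\binom{\kappa(s)}{p+1},
\]
where the last equality uses $\binom{\kappa(s)}{p+1}=0$ precisely when $s \notin G^p$. Thus the lemma reduces to establishing the equality $\dim_\kk \ker \delta_1^{p,-p-1} = \psi^p$.

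To compute this kernel dimension, I would filter the source along the fixed order $G^p = \{s_1,\ldots,s_{|G^p|}\}$. Setting $V_j = \bigoplus_{i=1}^j \bigwedge^{p+1}(\ker_\ZZ A_{\sigma(s_i)})$ and letting $\delta^{(j)}$ denote the restriction of $\delta_1^{p,-p-1}$ to $V_j$, the bound $p \leq |F^0|-2$ guarantees that each $s_i$ admits an $s'\in F^{p+1}$ with $s_i\subset s'$, so the induced inclusions on exterior powers force $\ker \delta^{(1)} = 0$. The claim then reduces to showing that the kernel grows by exactly $\psi^p(j)$ at each step:
\[
\dim_\kk \ker \delta^{(j+1)} - \dim_\kk \ker \delta^{(j)} = \psi^p(j), \qquad 1 \le j \le |G^p|-1.
\]
Summing over $j$ yields $\psi^p$, as required.

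To identify the new kernel elements at step $j+1$, I would use Lemma~\ref{lemma:EKdecomp} to translate $\delta_1^{p,-p-1}$ into the cellular differential of the primary resolution $R^\bullet$ on the simplex $\Omega$, tensored with exterior-algebra data. A new kernel element in $V_{j+1}\setminus V_j$ must have nonzero $s_{j+1}$-component, and its vanishing under $\delta^{(j+1)}$ corresponds to a cocycle in $C^p(\{s_i\}_{i\in\tau},\{s_i\}_{i\in\tau\setminus\{j+1\}};\kk)$ with $j+1 \in \tau \subseteq \{1,\dots,j+1\}$ and all coefficients nonzero, i.e., to a circuit $\tau \in L^p(j)$ for $s_{j+1}$ in the sense of Definition~\ref{def: circuit}. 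The kernel elements generated by a single circuit $\tau$ form a subspace canonically identified with $\bigwedge^{p+1}(\ker_\ZZ A_{\sigma(s_\tau)})$, where $s_\tau = \bigcup_{i\in\tau} s_i$, and hence contribute dimension $\binom{\kappa(s_\tau)}{p+1}$.

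When several circuits $\Lambda \in L^p(j,k)$ are considered simultaneously, their kernel contributions overlap precisely in $\bigwedge^{p+1}(\ker_\ZZ A_{\sigma(s_\Lambda)})$ of dimension $\binom{\kappa(s_\Lambda)}{p+1}$, and an inclusion-exclusion argument then yields
\[
\dim_\kk \ker \delta^{(j+1)} - \dim_\kk \ker \delta^{(j)} = \sum_{k=1}^{|L^p(j)|} \sum_{\Lambda \in L^p(j,k)} (-1)^{|\Lambda|+1}\binom{\kappa(s_\Lambda)}{p+1} = \psi^p(j).
\]
The main obstacle will be rigorously justifying this last identity: verifying that the kernel subspaces attached to distinct circuits intersect in exactly the expected exterior power (so that inclusion-exclusion yields an equality and not merely an inequality). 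This hinges on the non-degeneracy afforded by Convention~\ref{conv:A}(3) and on carefully tracking how each $\ker_\ZZ A_{\sigma(s_i)}$ embeds inside $\ker_\ZZ A_{\sigma(s_\Lambda)}$ as $\Lambda$ varies.
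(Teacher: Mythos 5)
Your proposal is correct and is essentially the paper's own argument in dual form: by rank--nullity (using that each single-summand restriction $\delta_{1,s_i}^{p,-p-1}$ is injective of rank $\binom{\kappa(s_i)}{p+1}$), your incremental kernel count $\dim_\kk\ker\delta^{(j+1)}-\dim_\kk\ker\delta^{(j)}$ coincides with the paper's intersection term $\dim_\kk\bigl[(\image \delta_{1,\{s_1,\dots,s_j\}}^{p,-p-1})\cap(\image \delta_{1,s_{j+1}}^{p,-p-1})\bigr]$, so your telescoping filtration is the same decomposition along the ordered set $G^p$. The circuit identification and the inclusion--exclusion identity you flag as the main obstacle are exactly how the paper evaluates these terms, via Lemma~\ref{lemma:EKimage} and the equality $\bigcap_{i\in I}\bigwedge^{p+1}(\ker_\ZZ A_{\sigma(s_i)})=\bigwedge^{p+1}\bigl(\ker_\ZZ A_{\bigcup_{i\in I}\sigma(s_i)}\bigr)$ of rank $\binom{\kappa(s_I)}{p+1}$, handled there at the same level of detail.
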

\begin{proof}
We first note that if $p > |F^0|-2$, 
then $R^{p+1} = 0$ and $\image \delta_1^{p,-p-1} = 0$. 

Now with $0 \leq p \leq |F^0| - 2$, 
fix an order for the elements of $G^p$, say 
\[
G^p = \{s_1, s_2, \dots, s_{|G^p|} \}, 
\]
as in Notation~\ref{not: psi}. 
For a subset $S\subseteq G^p$, 
let $\delta_{1,S}^{p,-q}$ denote the restriction of 
$\delta_{1,S}^{p,-q}$ to the summands of (\ref{eqn:ss}) that lie in $S$.
We use this notation to see that 
\begin{align}
\nonumber \dim_\kk & (\image \delta_1^{p,-p-1}) \\
& = 
	\sum_{i=1}^{|G^p|} \dim_\kk (\image \delta_{1,s_i}^{p,-p-1}) 
		- \sum_{j = 1}^{|G^p| - 1} \dim_\kk \left[
			( \image \delta_{1,\{ s_1, \dots, s_j \} }^{p,-p-1} )
			\cap ( \image \delta_{1, s_{j+1} }^{p,-p-1} )
		\right]. \label{eqn:linear algebra} 
\end{align}		

By Lemma~\ref{lemma:EKimage} and (\ref{eqn:ss}), for $s\in G^p$, 
\[
\dim_\kk (\image \delta_{1,s}^{p,-p-1}) 
= \binom{\kappa(s)}{p+1} \cdot \dim_\kk (\image \delta_{1,s}^{p,0}) 
= \binom{\kappa(s)}{p+1}.
\]
To compute the value of the terms in the second summand of 
(\ref{eqn:linear algebra}), we start by identifying 
a spanning set of the vector space 
\begin{align}
\label{eqn:intersection}
( \image \delta_{1,\{ s_1, \dots, s_j \} }^{p,-p-1} )
			\cap ( \image \delta_{1, s_{j+1} }^{p,-p-1} ).
\end{align} 
Consider  
$( \image \delta_{1,\{ s_1, \dots, s_j \} }^{p,0} )
			\cap ( \image \delta_{1, s_{j+1} }^{p,0} )$, 
which has dimension either 0 or 1. 
If this is 1, then 
by Lemma~\ref{lemma:EKimage} and (\ref{eqn:ss}), 
nonzero intersections of the form (\ref{eqn:intersection}) 
arise from circuits $s_I$ of $s_{j+1}$.
Further, the contribution of such a circuit $s_I$ to the dimension 
of~(\ref{eqn:intersection}) is the $\ZZ$-rank of 
\begin{align*}
\bigcap_{i\in I} \left[
	 \bigwedge^{p+1} \left(\ker_\ZZ A_{\sigma(s_i)} \right) 
\right] 
= \bigwedge^{p+1} \left( \ker_\ZZ 
A_{\mbox{\tiny $\bigcup_{i\in I} \sigma(s_i)$
}}
\right). 
\end{align*}
This is precisely $\binom{\kappa(s_I)}{p+1}$, 
by definition of $\kappa(-)$ in (\ref{kappa}).
Now applying the inclusion-exclusion principle to account for overlaps, 
\begin{align*}
\dim_\kk  (\image \delta_1^{p,-p-1}) 
& =	\sum_{i=1}^{|G^p|} \binom{ \kappa(s_i) }{p+1} 
		- \sum_{j = 1}^{|G^p| - 1} \psi^p(j), 
\end{align*}
and we obtain the desired formula by Notation~\ref{not: psi}.
\end{proof}

\begin{proof}[Proof of Theorem~\ref{thm:dim count squarefree}]
%
%Note that $\sigma(F^0) = \bigcap_{s\in F^0} \sigma(s)$ is the face of $\Delta$ 
%corresponding to the unique element of $F^{|F^0|-1}$.
%
Combining (\ref{eqn:use 'E1}) and 
Lemmas~\ref{lemma:first dim} and \ref{lemma:second dim}, 
\begin{align*}
\dim_\kk \left( \frac{\kk[\theta]}{ I + \< E\>} \right) 
= &
\sum_{p-q \geq 0} \sum_{s \in F^p} (-1)^{p-q}
 \binom{ \kappa(s) }{q} 
- \sum_{p=0}^{|F^0|-2} \sum_{s\in G^p} \binom{ \kappa(s) }{p+1} 
+ \sum_{p=0}^{|F^0| - 2} \psi^p \\
= & 
\sum_{s\in F^0\setminus G^0} \binom{\kappa(s)}{0} 
	+ \sum_{s\in G^0} \binom{ \kappa(s)-1}{0} \\
	& \quad + \sum_{p=1}^{|F^0|-1} \sum_{s \in F^p} \binom{\kappa(s)-1 }{p} 
	- \sum_{p=0}^{|F^0| - 2} \sum_{s\in G^p} \binom{ \kappa(s) }{p+1}  
	+ \sum_{p=0}^{|F^0| - 2} \psi^p \\
= & \  
|F^0\setminus G^0|	 
	+ \sum_{p=1}^{|F^0|-2} \sum_{s \in F^p\setminus G^p} \binom{ \kappa(s)-1 }{p}  
	+ \binom{ \kappa( \sigma(F^0) )-1}{|F^0|-1} \\
	& \quad - \sum_{p=0}^{|F^0|-2} \sum_{s\in G^p} \binom{ \kappa(s)-1}{p+1}
	+ \sum_{p=0}^{|F^0| - 2} \psi^p, 
\end{align*}
Recall from Remark~\ref{rem:deg in formula} that 
$|F^0\setminus G^0| = \deg(I)$. 
Further, if $s\in F^p\setminus G^p$, 
then $\kappa(s)-1<p$, and we obtain the desired formula. 
\end{proof}

%%%%%%%%%%%%%%%%%%%%%%%%%%%%%%%%%%%%%%%%%
%%%%%%%%%%%%%%%%%%%%%%%%%%%%%%%%%%%%%%%%%
\raggedbottom
% \addcontentsline{toc}{section}{\numberline{}}
% \bibliographystyle{amsalpha}\bibliography{bibliography}

\begin{thebibliography}{MMW05}%%%%%%%%%%%%%%%%%%%%%%%%
%%%%%%%%%%%%%%%%%%%%%%%%%%%%%%%%%%%%%%%%%




\bibitem[Ber08]{rank jumps}
Christine~Berkesch, \emph{The rank of a hypergeometric system}, 2008. 
{\tt arXiv:0807.0453v1 [math.AG]}

\bibitem[BH93]{bruns-herzog}
Winfried~Bruns and J\"urgen~Herzog, 
\emph{Cohen-Macaulay rings}, 2nd ed., 
Cambridge Studies in Advanced Mathematics, \textbf{39}, 
Cambridge University Press, Cambridge, 1998.

\bibitem[DMM06]{DMM}
Alicia~Dickenstein, Laura~Felicia~Matusevich, and Ezra~Miller, 
\emph{Binomial $D$-modules}, 2006. {\tt arXiv:math/0610353v3}

\bibitem[Eis95]{Eisenbud}
David~Eisenbud, 
\emph{Commutative algebra, with a view toward algebraic geometry}, 
Graduate Texts in Mathematics, vol. 150, Springer-Verlag, New York, 1995.

\bibitem[Har66]{Hartshorne paper}
Robin~Hartshorne, 
\emph{Connectedness of the Hilbert scheme}, 
Inst. Hautes \'{E}tudes Sci. Publ. Math. (1966), no. 29, 5--48. 

\bibitem[HTT]{HTT}
J\"urgen~Herzog, Yukihide~Takayama, and Naoki~Terai, 
\emph{On the radical of a monomial ideal}, Arch. Math. (Basel) 
\textbf{85} (2005), no. 5, 397--408.

\bibitem[M2]{M2}
Daniel~R.~Grayson and Michael~E.~Stillman,
\emph{Macaulay 2, a software system for research in algebraic geometry},
available at {\tt http://www.math.uiuc.edu/Macaulay2/}.

\bibitem[Jar02]{cm examples}
Abdul~Salam~Jarrah, 
\emph{Integral closures of Cohen--Macaulay monomial ideals}, Comm. Algebra, 
\textbf{30} (2002), no. 11, 5473--5478.

\bibitem[MMW05]{MMW}
Laura~Felicia~Matusevich, Ezra~Miller, and Uli~Walther, 
\emph{Homological methods for hypergeometric families}, 
J. Amer. Math. Soc. \textbf{18} (2005), no.~4, 919--941.

\bibitem[Mil02]{cm quotients}
Ezra~Miller, 
\emph{Cohen-Macaulay quotients of normal semigroup rings via 
irreducible resolutions}, 
Math. Res. Lett. \textbf{9} (2002), no.~1, 117--128.

\bibitem[Mil08]{miller-letter}
Ezra~Miller, 
\emph{Topological Cohen--Macaulay criteria for monomial ideals}, 2008. 
{\tt arXiv:0809.1458v1 [math.AC]}

\bibitem[MS05]{MS05}
Ezra~Miller and Bernd~Sturmfels, 
\emph{Combinatorial commutative algebra}, 
Graduate Texts in Mathematics, \textbf{227}, 
Springer-Verlag, New York, 2005. 

\bibitem[Mus00]{mustata}
Mircea~Musta\c t\v a, \emph{Local cohomology at monomial ideals}, 
J. Symbolic Comput. \textbf{29} (2000), no. 4-5, 709--720. 

\bibitem[SST00]{SST}
Mutsumi~Saito, Bernd~Sturmfels, and Nobuki~Takayama, \emph{Gr\"obner
  {D}eformations of {H}ypergeometric {D}ifferential {E}quations},
  Springer-Verlag, Berlin, 2000.

\bibitem[Sta96]{stanley-green-book}
Richard~P.~Stanley, \emph{Combinatorics and commutative algebra}, 
Second edition, Progress in Mathematics, \textbf{41}, 
Birkh\"auser Boston, Inc., Boston, MA, 1996.

\bibitem[STV95]{stv}
Bernd~Sturmfels, Ng{\^o}~Vi{\^e}t Trung and Wolfgang~Vogel,
\emph{Bounds on degrees of projective schemes},
Math. Ann.  \textbf{302} (1995), no. 3, 417--432.
		
\bibitem[Tak05]{takayama05}
Yukihide~Takayama, 
\emph{Combinatorial characterizations of 
generalized Cohen-Macaulay monomial ideals}, 
Bull. Math. Soc. Sci. Math. Roumanie (N.S.) 
\textbf{48}(\textbf{96}) (2005), no. 3, 327--344.

\bibitem[Tay02]{taylor}
Amelia~Taylor, 
\emph{The inverse Gr\" obner basis problem in codimension two}, 
J. Symb. Comp. \textbf{33} (2002), 221--238.



%%%%%%%%%%%%%%%%%%%%%%%%%%%%%%%%%%%%%%%
\end{thebibliography}
\def\cprime{$'$} \def\cprime{$'$}
\providecommand{\MR}{\relax\ifhmode\unskip\space\fi MR }
% \MRhref is called by the amsart/book/proc definition of \MR.
\providecommand{\MRhref}[2]{%
  \href{http://www.ams.org/mathscinet-getitem?mr=#1}{#2}
}
\providecommand{\href}[2]{#2}
%%%%%%%%%%%%%%%%%%%%%%%%%%%%%%%%%%%%%%%%%
%%%%%%%%%%%%%%%%%%%%%%%%%%%%
%%%%%%%%%%%%%%%%%%%%%%%%%%%%%%%%%%%%%%%

%%%%%%%%%%%%%%%%%%%%%%%%%%%%%%%%%%%%%%%
\end{document}